\newcommand\reallywidehat[1]{\savestack{\tmpbox}{\stretchto{\scaleto{\scalerel*[\widthof{\ensuremath{#1}}]{\kern-.6pt\bigwedge\kern-.6pt}{\rule[-\textheight/2]{1ex}{\textheight}}}{\textheight}}{0.5ex}}\stackon[1pt]{#1}{\tmpbox}}
\newcommand{\cupdot}{\mathbin{\mathaccent\cdot\cup}}
\begin{document}
\newtheorem{thm}{Theorem}[section]
\newtheorem{lem}[thm]{Lemma}
\newtheorem{prop}[thm]{Proposition}
\newtheorem{cor}[thm]{Corollary}
\theoremstyle{definition}
\newtheorem{ex}[thm]{Example}
\newtheorem{rem}[thm]{Remark}
\newtheorem{prob}[thm]{Problem}
\newtheorem{thmA}{Theorem}
\renewcommand{\thethmA}{}
\newtheorem{defi}[thm]{Definition}
\renewcommand{\thedefi}{}
\long\def\alert#1{\smallskip{\hskip\parindent\vrule%
\vbox{\advance\hsize-2\parindent\hrule\smallskip\parindent.4\parindent%
\narrower\noindent#1\smallskip\hrule}\vrule\hfill}\smallskip}
\def\ff{\frak}
\def\Spec{\mbox{\rm Spec}}
\def\type{\mbox{ type}}
\def\Hom{\mbox{ Hom}}
\def\rank{\mbox{ rank}}
\def\Ext{\mbox{ Ext}}
\def\Ker{\mbox{ Ker}}
\def\Max{\mbox{\rm Max}}
\def\End{\mbox{\rm End}}
\def\l{\langle\:}
\def\r{\:\rangle}
\def\Rad{\mbox{\rm Rad}}
\def\Cont{\mbox{\rm Cont}}
\def\Zar{\mbox{\rm Zar}}
\def\Supp{\mbox{\rm Supp}}
\def\Rep{\mbox{\rm Rep}}
\def\cal{\mathcal}
\title[Profinite completions and MacNeille completions of MV-algebras]{Profinite completions and MacNeille completions of MV-algebras}
\thanks{2010 Mathematics Subject Classification.
06D35, 06E15, 06D50\\Key words: semisimple MV-algebra, separating MV-algebra, profinite completion, profinite MV-algebra, MacNeille completion.}
\thanks{\today}
\author{Jean B. Nganou}
\address{Department of Mathematics, University of Oregon, Eugene,
OR 97403} \email{nganou@uoregon.edu}
\begin{abstract} We provide a concrete description of the profinite completion of an arbitrary MV-algebra, a description that generalizes the well known profinite completion of a Boolean algebra as the power set of its Stone space. We also use the description found to investigate profinite MV-algebras that are profinite completions of MV-algebras. Finally, we characterize semisimple MV-algebras for which the profinite completion and the MacNeille completion are isomorphic. 
\vspace{.2cm}\\
\end{abstract}
\maketitle
\section{Introduction}
Profinite completions and MacNeille completions have been thoroughly investigated in many important varieties of algebras. Some of the most popular varieties include the variety $\mathbf{DL}$ of bounded distributive lattices, the variety $\mathbf{HA}$ of Heyting algebras, and the variety $\mathbf{BA}$ of Boolean algebras. It is known that the profinite completion of a Boolean algebra is isomorphic to the power set of its Stone space  (\cite[Corollary 3.3]{BM}), that the profinite completion of a bounded distributive lattice with Priestley space $X$ is isomorphic to the lattice Up($X$) of upsets of $X$ (\cite[Theorem 3.1]{BM}), and that the completion of a Heyting algebra with Esakia space $X$ is isomorphic to the Heyting algebra Up($X_{\text{fin}}$) of the upsets of $X_{\text{fin}}$ (\cite[Theorem 4.7]{BGMM}).\par
Besides $\mathbf{DL}$ and $\mathbf{HA}$, the variety $\mathbf{MV}$ of MV-algebras is another popular extension of $\mathbf{BA}$. MV-algebras constitute the algebraic counterpart of the \L ukasiewicz many valued logic and via the Chang-Mundici functor are equivalent to abelian $\ell$-groups with distinguished units.
 More precisely, an MV-algebra can be defined as an Abelian monoid $(A,\oplus , 0)$ with an involution $\neg:A\to A$ (i.e., $\neg \neg x=x$ for all $x\in A$) satisfying the following axioms for all $x, y\in A$: $\neg 0\oplus x=\neg0$,
$\neg(\neg x\oplus y)\oplus y=\neg(\neg y\oplus x)\oplus x$. For any $x,y\in A$, if one writes $x\leq y$ when $\neg x\oplus y=\neg 0:=1$, then $\leq$ induces a partial order on $A$, which is in fact a lattice order where $x\vee y=\neg(\neg x\oplus y)\oplus y$ and $x\wedge y=\neg(\neg x\vee \neg y)$. An ideal of an MV-algebra is a nonempty subset $I$ of $A$ such that (i) for all $x,y\in I$, $x\oplus y\in I$ and (ii) for all $x\in A$ and $y\in I$, $x\leq y$ implies $x\in I$. A prime ideal of $A$ is proper ideal $P$ such that whenever $x\wedge y\in P$ with $x, y\in A$, then $x\in P$ or $y\in P$. A maximal ideal of $A$ is proper ideal $M$ such that for every $a\in A$ such that $a\notin M$, there exists a integer $n\geq 1$ such that $\neg na\in M$.\par
A topological MV-algebra is an MV-algebra $(A,\oplus, \neg, 0)$ together with a topology $\tau$ such that $\oplus$ and $\neg$ (and in particular $\vee$, $\wedge$) are $\tau$-continuous. A detailed treatment of topological MV-algebras can be found in \cite{Hoo, WH} 

Unlike bounded distributive lattices, Heyting algebras, Boolean algebras, or orthomodular lattices, where the topic of profiniteness has been well investigated (see for e.g., \cite{BGMM, BM, BV, CG}), MV-algebras have not yet received the same level of attention. To continue our study on the theme of profiniteness in MV-algebras, which has been initiated in \cite{jbn, jbn2}, we focus in this article on profinite completions and applications.\par
In the first part, we compute the profinite completion of every MV-algebra. We obtain that the profinite completion of an MV-algebra is the direct product of all its finite simple homomorphic images. As immediate byproducts of our description, we obtain simpler proofs of some previously known results such as the profinite completion of a Boolean algebra, the preservation of the Boolean center of regular MV-algebras  by profinite completions, and the characterization of MV-algebras that are isomorphic to their own profinite completions. \par
In the second part of the article, we use the description of the profinite completion found to characterize profinite MV-algebras that are isomorphic to profinite completions of some MV-algebras. Among other things, we prove that a profinite MV-algebra $A:= \prod_{x\in X}\L_{n_x}$ is isomorphic to the profinite completion of an MV-algebra if and only if  there exists a Stone space $Y$ containing a dense copy $X'$ of $X$ and a separating subalgebra $A'$ of $\Cont(Y)$ satisfying:\\
(i) For every $x\in X$ (when $X$ is identified with $X'$), $J_x$ has rank $n_x$ in $A'$, where $J_x:=\{f\in A':f(x)=0\}$; and (ii) For every $y\in Y\setminus X$, $J_y$ has infinite rank in $A'$.

Besides the profinite completion, another popular completion that has been well studied on algebras is the MacNeille completion. The MacNeille completion of MV-algebras has been previously investigated in \cite{BGL, DP, Ja}. In the last final part of the paper, we characterize semisimple MV-algebras for which the profinite completion and the MacNeille completion are isomorphic. We obtain that these are the atomic MV-algebras $A$ for which there exists a bijection $\tau$, from the set $\mathfrak{a}(A)$ of atoms of $A$ onto its set $\text{Max}_{f}(A)$ of maximal ideals of finite rank such that $\rank(\tau(a))=|a|+1$ for all atom $a$ of $A$.

The main results of the paper extend naturally the existing ones from the theory of Boolean algebras (Corollary \ref{Bool}, Theorem \ref{main2}, Corollary \ref{bounded} and Corollary \ref{Bool1}). They also offer simpler proofs to some previously obtained results on MV-algebras (Corollary \ref{self} and Corollary \ref{preserve}). 

For basic MV-algebra terminologies, the reader can consult \cite{C2, Mu}, and for basic facts about profinite MV-algebras, the reader can consult \cite{jbn, jbn2}. 
\section{profinite completions of MV-algebras}
Let $A$ be an MV-algebra and let $\text{id}_f(A)$ be the set of all ideals $I$ of $A$ such that $A/I$ is finite. For every $I, J\in \text{id}_f(A)$ such that $I\subseteq J$, let $\phi_{JI}:A/I\to A/J$ be the natural homomorphism, i.e., $\phi_{JI}([a]_I)=[a]_J$ for all $a\in A$. Observe that $(\text{id}_f(A), \supseteq)$ is a directed set, and $\left\{(\text{id}_f(A), \supseteq), \{A/I\},\{\phi_{JI}\}\right\}$ is an inverse system of MV-algebras. The inverse limit of this inverse system is called the profinite completion of the MV-algebra $A$, and commonly denoted by $\widehat{A}$. The following description of $\widehat{A}$ is also well known:
$$\widehat{A}\cong \left\{\alpha\in \prod_{I\in \text{id}_f(A)}A/I: \phi_{JI}(\alpha(I))=\alpha(J) \; \text{whenever}\; I \subseteq J\right\} $$
In the sequel, the set of prime ideals of $A$ will be denoted by $\text{Spec}(A)$ and is endowed with the Zariski's topology. The set $\text{Max}(A)$ of maximal ideals of $A$ inherits the subspace topology of $\text{Spec}(A)$.\\
Let $A$ be an MV-algebra, and $M$ a maximal ideal of $A$. If $A/M$ is finite, then $A/M\cong \L_n$ for some integer $n\geq 2$, which is called the rank of $M$ and in this case $M$ is said to have finite rank. If $A/M$ is infinite, then $M$ is said to have infinite rank.

Let $\text{Max}_f(A):=\text{Max}(A)\cap \text{id}_f(A)$ is the set of maximal ideals of finite rank in $A$.

The following result, which is a strengthened and more complete version of \cite[Proposition 3.1]{jbn2}, is crucial for our subsequent results.
\begin{prop}\label{frank} Let $A$ be an MV-algebra.
\begin{itemize}
\item[1.] If $I\in \text{id}_f(A)$, then exist $M_1, M_2,\ldots, M_r\in \text{Max}_f(A)$ such that $I=M_1\cap M_2\cap \ldots \cap M_r$ and $A/I\cong \prod_{i=1}^rA/M_i$. Furthermore, the set $\mathcal{S}(I):=\{M_1, M_2,\ldots, M_r\}\subseteq \text{Max}_f(A)$ is uniquely determined by $I$.
\item[2.] For every $I, J\in \text{id}_f(A)$, if $I\subseteq J$, then $\mathcal{S}(J)\subseteq \mathcal{S}(I)$.
\end{itemize}
\end{prop}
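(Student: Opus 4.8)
The plan is to reduce everything to the structure theory of finite MV-algebras together with the ideal correspondence under quotients. First I would fix $I\in \text{id}_f(A)$ and recall that a finite MV-algebra is semisimple and decomposes, essentially uniquely, as a finite product of finite simple MV-algebras; since the finite simple MV-algebras are precisely the $\L_n$, this yields an isomorphism $A/I\cong \prod_{i=1}^r\L_{n_i}$ for suitable integers $n_i\geq 2$ (see \cite{C2, Mu}). In particular $A/I$ has trivial radical.

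The key step is to identify these factors with quotients of $A$ by maximal ideals. By the lattice isomorphism between ideals of $A/I$ and ideals of $A$ containing $I$, the maximal ideals of $A$ that contain $I$ correspond bijectively to the maximal ideals of $\prod_{i=1}^r\L_{n_i}$; the latter are exactly the kernels of the $r$ coordinate projections, so there are precisely $r$ of them. Pulling them back yields maximal ideals $M_1,\ldots,M_r$ of $A$, each containing $I$ and, by the third isomorphism theorem, satisfying $A/M_i\cong \L_{n_i}$; in particular each $A/M_i$ is finite, so $M_i\in \text{Max}_f(A)$. Since a finite MV-algebra has trivial radical, the intersection of all its maximal ideals is $\{0\}$, which pulls back to $I=M_1\cap\cdots\cap M_r$, and the decomposition $A/I\cong \prod_{i=1}^r\L_{n_i}\cong \prod_{i=1}^r A/M_i$ gives the claimed isomorphism.

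For the uniqueness assertion I would observe that the construction describes $\mathcal{S}(I)$ intrinsically: it is exactly the set of all maximal ideals of $A$ that contain $I$. Indeed, every maximal ideal containing $I$ is the preimage of a maximal ideal of the finite algebra $A/I$, hence is one of $M_1,\ldots,M_r$, and conversely each $M_i$ contains $I$. Because $\{M\in \text{Max}(A):I\subseteq M\}$ depends only on $I$, the set $\mathcal{S}(I)$ is well defined and independent of the chosen decomposition, and each of its members has finite rank since $A/M$ is a quotient of the finite algebra $A/I$.

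Part 2 then follows at once from this intrinsic description: if $I\subseteq J$, every maximal ideal containing $J$ also contains $I$, so $\mathcal{S}(J)=\{M\in \text{Max}(A):J\subseteq M\}\subseteq \{M\in \text{Max}(A):I\subseteq M\}=\mathcal{S}(I)$. I expect the only genuinely delicate point to be the uniqueness in Part 1, namely arguing cleanly that the product decomposition of $A/I$ introduces no spurious repetition or ambiguity among the $M_i$. Phrasing $\mathcal{S}(I)$ as the intrinsic set of maximal ideals lying over $I$, rather than as the list of factors extracted from a particular isomorphism, is precisely what circumvents this difficulty.
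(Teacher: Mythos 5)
Your proposal is correct, and the construction of the $M_i$ is the same as the paper's: decompose the finite quotient $A/I$ as $\prod_{i=1}^r\L_{n_i}$ via \cite[Proposition 3.6.5]{C2} and pull back the kernels of the coordinate projections. Where you diverge is the uniqueness argument. The paper proves uniqueness by a prime-ideal containment argument: if $M_1\cap\cdots\cap M_r=M'_1\cap\cdots\cap M'_s$, then each $M'_j$, being prime, must contain some $M_i$, hence equal it by maximality; part 2 is then deduced from the same computation. You instead identify $\mathcal{S}(I)$ intrinsically as $\{M\in\text{Max}(A):I\subseteq M\}$, using the ideal correspondence for the quotient $A/I$ together with the fact that the maximal ideals of a finite product of simple algebras are exactly the kernels of the coordinate projections. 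Your route buys a cleaner uniqueness statement (independence from the chosen isomorphism is automatic) and makes part 2 a one-line consequence, but it leans on the classification of ideals of a finite direct product of MV-algebras, which you should cite or prove; the paper's primality argument avoids that and uses only the standard fact that a prime ideal containing a finite intersection contains one of the intersectands. Both arguments are sound, and your observation that $I=\bigcap_i M_i$ because the finite algebra $A/I$ has trivial radical is a valid substitute for the paper's direct verification.
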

\begin{proof}
1. Suppose that $I$ is an ideal of $A$, with $A/I$ finite. Then, by \cite[Proposition 3.6.5]{C2}, there is an isomorphism $\varphi:A/I\to \prod_{i=1}^r\L_{n_i}$, for some integers $n_1, n_2, \ldots, n_r\geq 2$. For each $k=1, 2, \ldots, r$, let $M_k=\ker (q_k\circ \varphi\circ p_I)$, where $q_k$ is the natural projection $\prod_{i=1}^r\L_{n_i}\to \L_k$ and $p_I:A\to A/I$ is the canonical projection. Then $M_k$ is a maximal ideal of $A$ since $A/M_k\cong \L_k$, which is simple. In addition, it is clear that $I=M_1\cap M_2\cap \ldots \cap M_r$ and $A/I\cong \prod_{i=1}^rA/M_i$. Finally, for the uniqueness of the maximal ideals, suppose that $M_1\cap M_2\cap \ldots \cap M_r=M'_1\cap M'_2\cap \ldots \cap M'_s$. Then for every $j=1, \ldots, s$, $M_1\cap M_2\cap \ldots \cap M_r\subseteq M'_j$. But, since each $M_i$ is prime (as any maximal ideal is), then $M_i\subseteq M'_j$ for some $i$. It follows from the maximality of $M_i$ and $M'_j$ that $M_i=M'_j$. Similarly, for every $i=1, \ldots, r$, there exists $j=1, \ldots, s$ such that $M_i=M'_j$. Hence both $M_1, M_2,\ldots, M_r$ and $M'_1, M'_2,\ldots, M'_s$ determine the same set of distinct maximal ideals.\\
2. Suppose that $I=M_1\cap M_2\cap \ldots \cap M_r\subseteq M'_1\cap M'_2\cap \ldots \cap M'_s=J$. Then, as seen in the preceding proof of uniqueness, for every $j=1, \ldots, s$, there exists $i=1, \ldots, r$ such that $M_i=M'_j$. Thus, $\mathcal{S}(J)\subseteq \mathcal{S}(I)$ as claimed.
\end{proof}
\begin{rem}\label{keyisom}
For $I\in \text{id}_f(A)$, consider the homomorphism $\varphi_I:A/I\to \prod_{M\in \mathcal{S}(I)}A/M$ defined by $\varphi_I([a]_I)(M)=[a]_M$ for all $a\in A$ and $M\in \mathcal{S}(I)$. Then, as $I=\cap_{M\in \mathcal{S}(I)}M$, $\varphi_I$ is one-to-one. In addition, by Proposition \ref{frank}(1.) $A/I$ and $\prod_{M\in \mathcal{S}(I)}A/M$ are isomorphic and finite (therefore have same cardinality), thus $\varphi_I$ is indeed an isomorphism.
\end{rem}
We shall use the following notations throughout the paper. We set $\mathcal{I}:=\{ \mathcal{S}(I):I\in \text{id}_f(A)\}$ and for each $I\in \text{id}_f(A)$, $A_I:=\prod_{M\in \mathcal{S}(I)}A/M$. Note that when $\mathcal{S}(J)\subseteq \mathcal{S}(I)$, there is a homomorphism $\mu_{JI}: A_I\to A_J$, namely the natural projection. It is easy to verify that $\{(\mathcal{I}, \subseteq), \{A_I\}, \{\mu_I\}\}$ is an inverse system.
\begin{lem}\label{system}
For every MV-algebra $A$, its profinite completion is isomorphic to the inverse limit of $\{(\mathcal{I}, \subseteq), \{A_I\}, \{\mu_I\}\}$.
\end{lem}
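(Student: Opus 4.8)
The plan is to prove the two inverse systems are isomorphic, so that their inverse limits coincide. The comparison is mediated by the index assignment $\sigma\colon I\mapsto\mathcal{S}(I)$ together with the isomorphisms $\varphi_I\colon A/I\to A_I$ supplied by Remark \ref{keyisom}.

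First I would verify that $\sigma$ is an order isomorphism from $(\text{id}_f(A),\supseteq)$ onto $(\mathcal{I},\subseteq)$. Surjectivity is immediate from the definition of $\mathcal{I}$, and Proposition \ref{frank}(1) supplies the crucial identity $I=\bigcap_{M\in\mathcal{S}(I)}M$; this recovers $I$ from $\mathcal{S}(I)$, so $\sigma$ is in fact a bijection with inverse $S\mapsto\bigcap_{M\in S}M$. For the order, Proposition \ref{frank}(2) gives one implication while the displayed identity gives the other, so that for $I,J\in\text{id}_f(A)$ one has $I\supseteq J$ if and only if $\mathcal{S}(I)\subseteq\mathcal{S}(J)$. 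Thus the two index posets are isomorphic via $\sigma$; in particular distinct finite-index ideals carry distinct associated sets, so the objects $A_I$ are unambiguously indexed by $\mathcal{I}$.

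Next I would check that the family $\{\varphi_I\}$ intertwines the transition maps, that is, $\mu_{JI}\circ\varphi_I=\varphi_J\circ\phi_{JI}$ whenever $I\subseteq J$ (equivalently $\mathcal{S}(J)\subseteq\mathcal{S}(I)$). This is a one-line evaluation: applied to a class $[a]_I$ and read off at a maximal ideal $M\in\mathcal{S}(J)$, both composites return $[a]_M$, since $\mu_{JI}$ is the projection deleting the coordinates in $\mathcal{S}(I)\setminus\mathcal{S}(J)$ and $\phi_{JI}([a]_I)=[a]_J$.

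With these two facts in hand the conclusion is formal: an order isomorphism of the index sets together with a compatible family of MV-isomorphisms of the objects induces an isomorphism of the inverse limits. Concretely, the map $\widehat{A}\to\varprojlim_{S\in\mathcal{I}}A_S$ sending $\alpha\mapsto\big(\varphi_I(\alpha(I))\big)$, reindexed by $S=\mathcal{S}(I)$, is a well-defined MV-homomorphism, the required thread condition $\mu_{JI}(\varphi_I(\alpha(I)))=\varphi_J(\alpha(J))$ being exactly the intertwining identity combined with $\phi_{JI}(\alpha(I))=\alpha(J)$; it has the evident inverse $\beta\mapsto\big(\varphi_I^{-1}(\beta(\mathcal{S}(I)))\big)$. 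I expect the only point needing genuine care to be the bookkeeping of the two distinct index sets, and everything rests on the bijectivity of $\sigma$, which is guaranteed by the identity $I=\bigcap_{M\in\mathcal{S}(I)}M$; beyond this there is no serious obstacle.
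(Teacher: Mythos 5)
Your proposal is correct and follows essentially the same route as the paper: both establish an isomorphism of inverse systems via the order-reversing bijection $I\mapsto\mathcal{S}(I)$, the isomorphisms $\varphi_I$ of Remark \ref{keyisom}, and the commutativity of the square $\mu_{JI}\circ\varphi_I=\varphi_J\circ\phi_{JI}$. You simply spell out the details (injectivity of $\mathcal{S}$ via $I=\bigcap_{M\in\mathcal{S}(I)}M$ and the coordinatewise check of the intertwining identity) that the paper leaves to the reader.
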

\begin{proof}
It is enough to show that  $\left\{(\text{id}_f(A), \supseteq), \{A/I\},\{\phi_{JI}\}\right\}$ and $\{(\mathcal{I}, \subseteq), \{A_I\}, \{\mu_I\}\}$ are isomorphic inverse systems. To see this, first note that $\mathcal{S}:\text{id}_f(A)\to \mathcal{I}$ is an inclusion reversing bijection. For each $I\in \text{id}_f(A)$, $\varphi_I:A/I\to A_I$ (as defined in Remark \ref{keyisom}) is an isomorphism. To complete the proof, one needs to verify that for all $I \subseteq J$, the following diagram is commutative.
$$
\begin{CD}
A/I @>\phi_{JI}>> A/J\\
@V\varphi_{I}VV @VV\varphi_{J}V\\
A_I @>\mu_{JI}>> A_J
\end{CD}
$$
This commutativity follows easily from the various definitions.
\end{proof}
Next, the main Theorem of this section.
\begin{thm}\label{main}
For every MV-algebra $A$, its profinite completion is algebraically and topologically isomorphic  to $$ \prod_{M\in \text{Max}_{f}(A)} A/M$$
\end{thm}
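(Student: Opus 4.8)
The plan is to build directly on Lemma~\ref{system}, which already reduces the theorem to identifying the inverse limit of the system $\{(\mathcal{I}, \subseteq), \{A_I\}, \{\mu_{JI}\}\}$. The first step is to describe the index set $\mathcal{I}$ concretely: I claim it is exactly the collection of all finite subsets of $\text{Max}_f(A)$, directed by inclusion. One containment is immediate from Proposition~\ref{frank}(1), which guarantees that each $\mathcal{S}(I)$ is a finite subset of $\text{Max}_f(A)$. For the reverse containment, given any finite set $S = \{M_1, \ldots, M_r\}$ of distinct ideals in $\text{Max}_f(A)$, I would put $I := M_1 \cap \cdots \cap M_r$; since $I = \bigcap_i M_i$, the natural map $A/I \to \prod_{i=1}^r A/M_i$ is injective, so $A/I$ embeds into the finite MV-algebra $\prod_i A/M_i$ and is therefore finite. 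Hence $I \in \text{id}_f(A)$, and the uniqueness clause of Proposition~\ref{frank}(1) forces $\mathcal{S}(I) = S$. Thus $\mathcal{I}$ is precisely the directed set of finite subsets of $X := \text{Max}_f(A)$, with $A_I = \prod_{M \in \mathcal{S}(I)} A/M$ and each $\mu_{JI}$ the corresponding coordinate projection.

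The heart of the argument is then the general fact that the inverse limit, taken over the directed set of finite subsets $S \subseteq X$, of the finite products $A_S := \prod_{M \in S} A/M$ with projections as transition maps, is canonically isomorphic to the full product $\prod_{M \in X} A/M$. I would exhibit this isomorphism explicitly. Given a thread $(\beta_S)_S$ in the inverse limit (so $\beta_S \in A_S$ and $\mu_{TS}(\beta_S) = \beta_T$ whenever $T \subseteq S$), compatibility with the singleton index $\{M\}$ forces the $M$-th coordinate of $\beta_S$ to equal $\beta_{\{M\}}$ for every finite $S \ni M$, hence to be independent of $S$; recording these coordinates defines an element of $\prod_{M \in X} A/M$. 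Conversely, any $\gamma \in \prod_{M \in X} A/M$ restricts, for each finite $S$, to $\gamma|_S \in A_S$, and these restrictions obviously form a thread. Since the MV-operations are computed coordinatewise in all the algebras involved, these two mutually inverse assignments are MV-homomorphisms, yielding the algebraic isomorphism $\widehat{A} \cong \prod_{M \in \text{Max}_f(A)} A/M$.

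For the topological statement I would argue by compactness. Each $A/M$ with $M \in \text{Max}_f(A)$ is finite, hence a finite discrete MV-algebra, so $\prod_{M \in X} A/M$ carries the product topology of a profinite (compact, Hausdorff, totally disconnected) topological MV-algebra. On the other side, the profinite completion carries the inverse-limit topology, under which it is a closed subspace of a product of finite discrete algebras and is thus also compact and Hausdorff. Under the correspondence above, the canonical projection of the inverse limit onto $A_S$ agrees with the projection of $\prod_{M \in X} A/M$ onto the finite subproduct $\prod_{M \in S} A/M$; since the single-coordinate projections already generate the product topology and each such projection arises in this way, the bijection is continuous. A continuous bijection between compact Hausdorff spaces is a homeomorphism, so the algebraic isomorphism is simultaneously topological.

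The main obstacle is conceptual rather than computational: pinning down $\mathcal{I}$ as the full lattice of finite subsets of $\text{Max}_f(A)$, and then recognizing the ensuing inverse limit as an unrestricted direct product over the possibly infinite set $\text{Max}_f(A)$. Once the index set is correctly identified, constructing the mutually inverse homomorphisms and running the compactness argument for the topology are both routine.
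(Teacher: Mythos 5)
Your proof is correct and follows the same overall strategy as the paper: both reduce the theorem to Lemma \ref{system} and then identify the inverse limit of $\{(\mathcal{I},\subseteq),\{A_I\},\{\mu_{JI}\}\}$ with the full product, with the singleton index sets $\{M\}$, $M\in\text{Max}_{f}(A)$, doing the real work. The execution differs in two respects. First, where the paper verifies the universal property abstractly (defining $\Theta(b)(M)=q_M(b)$ and checking $p_I\circ\Theta=q_I$ together with uniqueness), you realize the limit concretely as a set of compatible threads and exhibit mutually inverse homomorphisms; along the way you establish the sharper structural fact, not stated in the paper, that $\mathcal{I}$ is exactly the lattice of \emph{all} finite subsets of $\text{Max}_{f}(A)$. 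Your argument for this (the diagonal map embeds $A/(M_1\cap\cdots\cap M_r)$ into the finite algebra $\prod_i A/M_i$, so the intersection lies in $\text{id}_f(A)$, and the uniqueness clause of Proposition \ref{frank}(1) pins down $\mathcal{S}(I)$) is sound, though strictly only the presence of the singletons and directedness of $\mathcal{I}$ are needed for the thread argument. Second, for the topological claim the paper simply cites that isomorphisms of Stone MV-algebras are automatically homeomorphisms, whereas you give a self-contained argument via compactness (a continuous bijection of compact Hausdorff spaces is a homeomorphism); both are valid, and yours has the merit of not leaning on the external references.
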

\begin{proof}
Using Lemma \ref{system}, we shall prove that $\prod_{M\in \text{Max}_{f}(A)} A/M$ is the inverse limit of the inverse system $\{(\mathcal{I}, \subseteq), \{A_I\}, \{\mu_I\}\}$. We achieve this by defining projections $p_I:\prod_{M\in \text{Max}_{f}(A)} A/M\to A_I$ and showing $\left( \prod_{M\in \text{Max}_{f}(A)} A/M, p_I\right)$ has the universal property.\\
1. Let $I\in \text{id}_f(A)$, we have the natural projection $p_I:\prod_{M\in \text{Max}_{f}(A)} A/M\to A_I$. More explicitly, for every $\alpha \in \prod_{M\in \text{Max}_{f}(A)}A/M$, and $M\in \mathcal{S}(I)$, $p_I(\alpha)(M)=\alpha(M)$. It is clear that $\mu_{JI}\circ p_I=p_J$ for all $I, J\in \in \text{id}_f(A)$ such that $\mathcal{S}(J)\subseteq \mathcal{S}(I)$.\\
2. Let $B$ be an MV-algebra together with a family $q_I:B\to A_I$ of homomorphisms that are compatible with the transition morphisms $\{\mu_{JI}\}$. Define $$\Theta: B\to \prod_{M\in \text{Max}_{f}(A)} A/M\; \; \; \text{by}\; \; \; \Theta(b)(M)=q_M(b)$$ for all $b\in B$, and $M\in \text{Max}_{f}(A)$. Note that for every $M\in \text{Max}_{f}(A)$, $A_M=A/M$ since $\mathcal{S}(M)=\{M\}$. It follows that $\Theta$ is a well-defined homomorphism. \par
We need to show that $\Theta$ is the unique homomorphism that makes the following diagram commutative.
\[
\begin{tikzcd}
  {} & B \arrow{d}{q_I} \arrow[dashed,swap]{dl}{ \Theta} \\
 \prod_{M\in \text{Max}_{f}(A)} A/M \arrow{r}{p_I} & A_I
\end{tikzcd}
\]
That is, $p_I\circ \Theta=q_I$ for all $I\in \text{id}_f(A)$.\\
(i) First, it is clear by construction of $\Theta$ that $p_M\circ \Theta=q_M$ for all $M\in \text{Max}_{f}(A)$. \\
(ii) Second, we prove that $q_I(b)(M)=q_M(b)$ and $p_I(\alpha)(M)=p_M(\alpha)$ for all $M\in \mathcal{S}(I)$. For the first equality, observe that $M\in \mathcal{S}(I)$ means $I\subseteq M$, and by the compatibility of the $q_I$'s, we have $\mu_{MI}\circ q_I=q_M$, which is exactly the first equality. The second equality is clear from the definition of $p_I$.\\
Now, let $I\in \text{id}_f(A)$, $b\in B$ and $M\in \mathcal{S}(I)$. Then, it follows from (i) and (ii) that: $(p_I\circ \Theta)(b)(M)=p_I(\Theta(b))(M)=p_M(\Theta(b))=q_M(b)=q_I(b))(M)$. Thus, $p_I\circ \Theta=q_I$ for all $I\in \text{id}_f(A)$ as claimed.\par
It remains to address the uniqueness of $\Theta$. \par 
Suppose that $\Theta': B\to \prod_{M\in \text{Max}_{f}(A)} A/M$ is a homomorphism such that $p_I\circ \Theta'=q_I$ for all $I\in \text{id}_f(A)$. Then for every $b\in B$ and $M\in \mathcal{S}(I)$, $(p_I\circ \Theta')(b)(M)=q_I(b)(M)$, that is $p_I(\Theta'(b))(M)=q_I(b)(M)$, or $\Theta'(b)(M)=q_I(b)(M)$. The last equation is exactly the definition of $\Theta$. Thus, $\Theta'=\Theta$, and the proof is complete.

Finally, we observe that any isomorphism between Stone MV-algebras is automatically a homeomorphism. This can be seen using \cite[Thm. 2.3]{jbn2} \cite[Prop. 3.5]{jbn}.
\end{proof}
As an immediate consequence of this Theorem, we obtain a well-known description of the profinite completion of a Boolean algebra.
\begin{cor}\label{Bool}
For every Boolean algebra $B$, its profinite completion is given by $$\widehat{B}\cong \mathcal{P}(X)$$
where, $X$ is the Stone space of $B$.
\end{cor}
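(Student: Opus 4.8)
The plan is to apply Theorem \ref{main} directly, after recognizing a Boolean algebra as a special kind of MV-algebra and computing its finite-rank maximal ideals. A Boolean algebra $B$ is precisely an idempotent MV-algebra (one in which $x\oplus x=x$), so the inverse-limit description furnished by Theorem \ref{main} applies verbatim; the whole work consists in identifying the set $\text{Max}_f(B)$ and the quotients $B/M$.

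First I would observe that for a Boolean algebra every maximal ideal $M$ yields a quotient $B/M\cong \L_2$, the two-element chain. Indeed, $B/M$ is a simple Boolean algebra, and the only simple Boolean algebra is $\L_2=\{0,1\}$. In particular every maximal ideal is finite, so that $\text{Max}_f(B)=\text{Max}(B)$. Moreover, by Stone duality the space $\text{Max}(B)$, equipped with the subspace topology inherited from $\text{Spec}(B)$, is exactly the Stone space $X$ of $B$: the maximal ideals correspond bijectively and homeomorphically to the points of $X$.

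With these identifications in hand, Theorem \ref{main} gives
$$\widehat{B}\cong \prod_{M\in \text{Max}_f(B)} B/M=\prod_{M\in \text{Max}(B)}\L_2\cong \L_2^{\,X}.$$
It then remains to invoke the standard isomorphism $\L_2^{\,X}\cong \mathcal{P}(X)$, which sends a function $f\colon X\to\{0,1\}$ to the set $f^{-1}(1)$; this is an isomorphism of Boolean algebras, and it is automatically a homeomorphism by the concluding remark in the proof of Theorem \ref{main}.

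The main point requiring care --- really the only one --- is the claim that $\text{Max}(B)$ is homeomorphic to the Stone space $X$, rather than being merely in bijection with it. This is exactly where Stone duality enters: one must check that the Zariski (hull-kernel) topology on $\text{Max}(B)$ coincides with the Stone topology on the space of ultrafilters of $B$, using that in a Boolean algebra prime and maximal ideals coincide and are complements of ultrafilters. Everything else is a routine application of Theorem \ref{main} together with the elementary fact that $\L_2$ is the unique simple Boolean algebra.
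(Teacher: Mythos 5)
Your proposal is correct and follows essentially the same route as the paper: identify every maximal ideal of $B$ as having rank $2$ because the two-element algebra is the only simple Boolean algebra, conclude $\text{Max}_f(B)=\text{Max}(B)=X$, and apply Theorem \ref{main} to get $\widehat{B}\cong \mathbf{2}^X\cong \mathcal{P}(X)$. The extra care you take over the homeomorphism between $\text{Max}(B)$ and the Stone space is harmless but not needed for the stated (purely algebraic) isomorphism, since only the underlying index set of the product matters.
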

\begin{proof}
Note that since $B$ is a Boolean algebra, then every maximal ideal of $B$ has finite rank ( rank $2$). Indeed, for every maximal ideal $M$ of $B$, $B/M$ is a simple Boolean algebra and the only simple Boolean algebra is the 2-element Boolean algebra $\mathbf{2}$. Therefore, $\text{Max}_{f}(B)=\text{Max}(B)=X$, the Stone space of $B$. By Theorem \ref{main}, we obtain $\widehat{B}\cong \prod_{M\in X}\mathbf{2}\cong \mathbf{2}^X\cong \mathcal{P}(X)$.
\end{proof}
The next two results were established in \cite{jbn2}, but we can now offer simpler proofs thanks to Theorem \ref{main}.
\begin{cor}\cite[Theorem 3.8]{jbn2}\label{self}
An MV-algebra $A$ is isomorphic to its profinite completion if and only if $A$ is profinite and every maximal ideal of $A$ of finite rank is principal.
\end{cor}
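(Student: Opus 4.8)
The plan is to read everything through Theorem \ref{main}, which identifies $\widehat{A}$ with the full product $\prod_{M\in\text{Max}_{f}(A)}A/M$ and identifies the canonical map $A\to\widehat{A}$ with $\eta\colon a\mapsto([a]_M)_{M\in\text{Max}_{f}(A)}$; throughout I read ``$A$ is isomorphic to its profinite completion'' as ``$\eta$ is an isomorphism''. Since each $A/M$ with $M\in\text{Max}_{f}(A)$ is a finite simple MV-algebra, $\widehat{A}$ is a product of finite simple MV-algebras and hence profinite; so $A\cong\widehat{A}$ forces $A$ to be profinite, and this half of both implications is immediate.

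For the forward implication I would then fix $M_0\in\text{Max}_{f}(A)$ and pull back the corresponding coordinate ideal. In $\widehat{A}=\prod_{M}A/M$ the set $K_0:=\{\xi:\xi(M_0)=0\}$ is the principal ideal generated by the idempotent $u$ with $u(M_0)=0$ and $u(M)=1$ for $M\ne M_0$ (indeed $u\oplus u=u$, so $(u)=\{\xi:\xi\le u\}=K_0$). Because $\eta$ is an isomorphism, $\eta^{-1}(K_0)$ is again principal, generated by $\eta^{-1}(u)$; and unwinding the definition of $\eta$ gives $\eta^{-1}(K_0)=\{a:[a]_{M_0}=0\}=M_0$. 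Hence every $M_0\in\text{Max}_{f}(A)$ is principal, which is exactly the desired conclusion.

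For the converse I would use the structure theory of profinite MV-algebras from \cite{jbn,jbn2} to write $A\cong\prod_{x\in X}\L_{n_x}$, and then show that $\text{Max}_{f}(A)$ consists exactly of the coordinate ideals $J_x=\{a:a(x)=0\}$. One inclusion is clear, since each $J_x$ is maximal of finite rank $n_x$. For the other, the hypothesis reduces the problem to the key lemma: \emph{every principal maximal ideal of $\prod_{x\in X}\L_{n_x}$ is a coordinate ideal.} Granting this, $\text{Max}_{f}(A)=\{J_x:x\in X\}$, and Theorem \ref{main} yields $\widehat{A}=\prod_{x\in X}A/J_x=\prod_{x\in X}\L_{n_x}=A$ with $\eta$ the identity, which finishes the proof.

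The main obstacle is precisely that lemma. I would prove it by contracting a principal maximal ideal $M=(c)$ to the Boolean center $B(A)\cong\mathcal{P}(X)$, using that the image of $B(A)$ in the simple quotient $A/M$ is $\mathbf{2}$ (a simple MV-algebra has trivial Boolean center), so that $M\cap B(A)$ is a maximal ideal of $\mathcal{P}(X)$ given by $\{S\subseteq X:\chi_S\le kc\ \text{for some }k\}=\{S:S\subseteq D_k\ \text{for some }k\}$, where $D_k=\{x:c(x)\ge 1/k\}$. If $c$ vanishes at some $x_0$ then $(c)\subseteq J_{x_0}\subsetneq A$, so maximality forces $M=J_{x_0}$; if instead $c(x)>0$ for all $x$, then either some $D_k=X$ (whence $kc=1$ and $(c)=A$ is improper) or the $D_k$ strictly increase to $X$, in which case a diagonal choice of $S$ with $S\notin M\cap B(A)$ and $X\setminus S\notin M\cap B(A)$ contradicts the maximality of $M\cap B(A)$. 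Thus a proper principal maximal ideal must be a coordinate ideal, as needed.
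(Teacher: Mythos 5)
Your converse direction is correct and is in fact more self-contained than the paper's: the paper simply cites the structure theory of profinite MV-algebras to get $A\cong\prod_{M\in\text{PMax}(A)}A/M$ and the identity $\text{Max}_f(A)=\text{PMax}(A)$, whereas you re-prove the needed ingredient as your key lemma (every principal maximal ideal of $\prod_{x\in X}\L_{n_x}$ is a coordinate ideal) by contracting to the Boolean center $\mathcal{P}(X)$ and running a diagonal argument on the sets $D_k=\{x:c(x)\ge 1/k\}$. That lemma and its proof check out: $M\cap B(A)$ is indeed a maximal ideal of $\mathcal{P}(X)$ because the Boolean center of the simple quotient is $\mathbf{2}$, and in the case $c>0$ everywhere with all $D_k$ proper one can choose $S$ meeting every $X\setminus D_k$ together with its complement, contradicting maximality.

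The forward direction, however, has a genuine gap, and it is located exactly where you announce your reading of the statement. The corollary asserts an \emph{abstract} isomorphism $A\cong\widehat{A}$ (this is how the paper uses it, e.g.\ in the remark that no infinite Boolean algebra is isomorphic to its own profinite completion), not that the canonical map $\eta$ is an isomorphism. Under an arbitrary isomorphism $\psi\colon A\to\prod_{M\in\text{Max}_f(A)}A/M$, your pullback argument shows only that the ideals $\psi^{-1}(K_{M_0})$ are principal maximal ideals of finite rank; it does not show that \emph{every} $M_0\in\text{Max}_f(A)$ arises this way. The product $\prod_{M}A/M$ genuinely has non-coordinate maximal ideals of finite rank whenever some rank value is attained infinitely often (take a free ultrafilter concentrated on those coordinates; the ultrapower of a fixed $\L_m$ is $\L_m$), and such ideals are not principal. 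So to close the argument you must either rule these out for an $A$ satisfying $A\cong\widehat{A}$, or do what the paper does: invoke that any isomorphism between profinite MV-algebras is automatically a homeomorphism for the profinite topologies, conclude that every $M\in\text{Max}_f(A)$ is clopen (since the coordinate projections of $\widehat{A}$ are continuous), and then use that clopen maximal ideals of profinite MV-algebras are principal. As written, your proof establishes a different (weaker) theorem, namely the characterization of when the canonical map $\eta$ is an isomorphism.
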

\begin{proof}
Let $\text{PMax}(A)$ denotes the set of principal maximal ideals of $A$. We also observe from \cite[Theorem 2.3]{jbn2} that every isomorphic profinite MV-algebras are automatically homeomorphic. \\
$\Rightarrow:)$ Suppose that $A$ is isomorphic to its profinite completion. Then, $A$ is profinite since the profinite completion is profinite by definition. Therefore $A$ and $\widehat{A}$ are both isomorphic and homeomorphic. Let $M$ be a maximal ideal of $A$ with finite rank. Since from Theorem \ref{main}, $\widehat{A}\cong \prod_{M\in \text{Max}_{f}(A)} A/M$, then the projection $P_M:\widehat{A}\to A/M$ is continuous. Thus, the natural projection $A\to A/M$ is continuous.  Whence, $M$ is clopen (see for e.g. \cite[Theorem 3.12]{Hoo}), and by \cite[Lemma 3.1]{jbn}, $M$ is principal.\\
$\Leftarrow:)$ Suppose that $A$ is profinite and every maximal ideal of $A$ of finite rank is principal. Then, since $A$ is profinite, it follows from \cite[Theorem 2.5, Lemma 3.1] {jbn} that $A\cong \prod_{M\in \text{PMax}(A)} A/M$. It now follows from Theorem \ref{main}, and the fact that $\text{Max}_{f}(A)=\text{PMax}(A)$ that, $A\cong \prod_{M\in \text{PMax}(A)} A/M \cong  \prod_{M\in \text{Max}_{f}(A)} A/M\cong \widehat{A}$. Hence, $A\cong \widehat{A}$ as claimed.
\end{proof}
Recall \cite{BNS} that  an MV-algebra $A$ is called regular if for every prime ideal $N$ of its Boolean center $B(A)$, the ideal of $A$ generated by $N$ is a prime ideal of $A$. 
\begin{cor}\label{perserves}
Let $A$ be a regular MV-algebra in which every maximal ideal has finite rank. Then, $$B(\widehat{A})\cong \reallywidehat{B(A)}$$
\end{cor}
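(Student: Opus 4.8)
The plan is to evaluate both sides of the asserted isomorphism explicitly, using Theorem~\ref{main} for the left-hand side and Corollary~\ref{Bool} for the right-hand side, and then to reduce the whole statement to the existence of a single bijection between two index sets.

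First I would compute $B(\widehat{A})$. Since every maximal ideal of $A$ has finite rank, we have $\text{Max}_f(A)=\text{Max}(A)$, so Theorem~\ref{main} gives $\widehat{A}\cong\prod_{M\in\text{Max}(A)}A/M$, where each $A/M\cong\L_{n_M}$ is a finite simple MV-algebra. The Boolean center commutes with direct products, and the Boolean center of any finite simple MV-algebra $\L_n$ is the two-element Boolean algebra $\mathbf 2$ (its only idempotents under $\oplus$ being $0$ and $1$). Hence $B(\widehat{A})\cong\prod_{M\in\text{Max}(A)}\mathbf 2\cong\mathcal P(\text{Max}(A))$. For the other side, $B(A)$ is a Boolean algebra, so Corollary~\ref{Bool} yields $\reallywidehat{B(A)}\cong\mathcal P(X)$, where $X$ is the Stone space of $B(A)$; as a set, $X=\text{Max}(B(A))$. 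Since a bijection of sets induces an isomorphism of the associated power-set Boolean algebras, it therefore suffices to produce a bijection $\text{Max}(A)\to\text{Max}(B(A))$. The topological half of the conclusion is then automatic, by the same observation used at the close of the proof of Theorem~\ref{main} that isomorphic Stone MV-algebras are homeomorphic.

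The heart of the argument, and the only place where regularity enters, is to show that the contraction map $\Phi(M)=M\cap B(A)$ is the required bijection. For well-definedness, note that for $b\in B(A)$ its image in the simple quotient $A/M$ is idempotent, hence lies in $\{0,1\}\cong\mathbf 2$; the resulting surjection $B(A)\to\mathbf 2$ has kernel $M\cap B(A)$, which is thus a maximal ideal of $B(A)$. For surjectivity, given $N\in\text{Max}(B(A))$ (necessarily prime, as maximal ideals of a Boolean algebra are prime), regularity makes the generated ideal $\langle N\rangle$ prime in $A$. Invoking the standard MV-algebra fact that every prime ideal lies below a \emph{unique} maximal ideal (the ideals above a prime $P$ correspond to ideals of the MV-chain $A/P$, which form a chain; see \cite{C2}), let $M$ be the unique maximal ideal with $\langle N\rangle\subseteq M$. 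Then $N\subseteq M\cap B(A)$, and the latter is proper since $1\notin M$; maximality of $N$ forces $M\cap B(A)=N$. For injectivity, if $\Phi(M_1)=\Phi(M_2)=N$ then $\langle N\rangle\subseteq M_1\cap M_2$, and since $\langle N\rangle$ is prime the same uniqueness gives $M_1=M_2$.

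Chaining the four isomorphisms then produces
$$B(\widehat{A})\cong\mathcal P(\text{Max}(A))\cong\mathcal P(\text{Max}(B(A)))\cong\reallywidehat{B(A)},$$
with the middle isomorphism induced by $\Phi$, completing the proof. I expect the main obstacle to be the bijectivity of $\Phi$ in the third paragraph: verifying well-definedness through the simple quotients and, above all, correctly exploiting regularity together with the uniqueness of the maximal ideal above a given prime ideal. The computations of the two power-set descriptions are essentially bookkeeping once Theorem~\ref{main} and Corollary~\ref{Bool} are in hand.
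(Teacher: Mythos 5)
Your proposal is correct and follows essentially the same route as the paper: both sides are computed as power sets via Theorem~\ref{main} (using $\text{Max}_f(A)=\text{Max}(A)$, the fact that the Boolean center of a product is the product of the centers, and $B(\L_n)\cong\mathbf{2}$) and via Corollary~\ref{Bool}, and the two index sets are then identified. The only difference is that where the paper simply cites \cite[Proposition 26]{BNS} for the identification of $\text{Max}(A)$ with the Stone space of $B(A)$ for regular $A$, you prove the bijection $M\mapsto M\cap B(A)$ directly from regularity and the uniqueness of the maximal ideal above a prime; that argument is sound.
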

\begin{proof}
First we observe as $A$ is regular that $\text{Max}(A)$ is homeomorphic to the Stone space of $B(A)$ \cite[Proposition 26]{BNS}.  

By assumption and Theorem \ref{main}, we have $\widehat{A}\cong \prod_{M\in \text{Max}(A)} A/M$. Hence, $B(\widehat{A})\cong \prod_{M\in \text{Max}(A)} \mathbf{2}\cong \mathbf{2}^{\text{Max}(A)}$. 
In addition, since $A$ is regular, it follows from the homeomorphism stated above that, $\mathbf{2}^{\text{Max}(A)}\cong \mathbf{2}^{X}\cong \mathcal{P}(X)$, where $X$ is the Stone space of $B(A)$. Therefore, by Corollary \ref{Bool}, $\mathcal{P}(X)\cong \reallywidehat{B(A)}$ and $B(\widehat{A})\cong \reallywidehat{B(A)}$ as needed.

\end{proof}

Profinite completions preserve finite products as we prove next.
\begin{prop}\label{preserve}
For every positive integer $n$ and $A_1, A_2,\ldots, A_n$ MV-algebras, $$\reallywidehat{\prod_{i=1}^nA_i}\cong \prod_{i=1}^n\widehat{A_i}$$
\end{prop}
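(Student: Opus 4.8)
The strategy is to invoke Theorem~\ref{main}, which identifies the profinite completion of any MV-algebra with the product of its simple quotients of finite rank, indexed by $\text{Max}_f$. Thus it suffices to exhibit a natural bijection between $\text{Max}_f\!\left(\prod_{i=1}^n A_i\right)$ and the disjoint union $\bigcupdot_{i=1}^n \text{Max}_f(A_i)$ that preserves the corresponding simple quotients. By induction on $n$, the whole statement reduces to the case $n=2$, so the plan is to carry out the argument for $A_1\times A_2$ and then iterate.

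First I would recall the standard description of ideals of a finite product: every ideal $I$ of $A_1\times A_2$ has the form $I_1\times I_2$ with $I_j$ an ideal of $A_j$, and consequently $(A_1\times A_2)/I \cong (A_1/I_1)\times(A_2/I_2)$. Applying this to a maximal ideal $M$, primeness (hence maximality forcing the quotient to be simple, which cannot be a nontrivial product) shows that exactly one of $I_1, I_2$ is the whole algebra. Therefore each maximal ideal of $A_1\times A_2$ is either $M_1\times A_2$ with $M_1\in\text{Max}(A_1)$, or $A_1\times M_2$ with $M_2\in\text{Max}(A_2)$, and in either case the quotient is $A_1/M_1$ or $A_2/M_2$ respectively. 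This gives a bijection $\text{Max}(A_1\times A_2)\to\text{Max}(A_1)\cupdot\text{Max}(A_2)$ under which the simple quotients match, and since the quotient is unchanged, finiteness of the quotient is preserved on both sides; hence the bijection restricts to one between the finite-rank maximal ideals.

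With this correspondence in hand, the computation is immediate:
$$
\reallywidehat{A_1\times A_2}\cong\!\!\!\prod_{M\in\text{Max}_f(A_1\times A_2)}\!\!\!(A_1\times A_2)/M\cong\!\!\prod_{M_1\in\text{Max}_f(A_1)}\!\!\!A_1/M_1\ \times\!\!\prod_{M_2\in\text{Max}_f(A_2)}\!\!\!A_2/M_2\cong\widehat{A_1}\times\widehat{A_2},
$$
where the first and last isomorphisms are Theorem~\ref{main} and the middle one reindexes the product along the bijection just established. The general case follows by an easy induction, using $\prod_{i=1}^{n}A_i\cong\left(\prod_{i=1}^{n-1}A_i\right)\times A_n$ together with the case $n=2$.

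The only point requiring genuine care is the claim that maximal ideals of the product are exactly the ``coordinate'' ideals, i.e.\ that no maximal ideal mixes both factors nontrivially; I expect this to be the main obstacle, though it is standard. The cleanest justification is to note that if $M=M_1\times M_2$ with both $M_j$ proper, then the quotient is a nontrivial direct product, which is never simple (it has proper nonzero ideals), contradicting maximality of $M$. One should also confirm that the isomorphisms produced by Theorem~\ref{main} are topological, not merely algebraic; since the reindexing bijection on the index sets induces a homeomorphism of the product spaces, the resulting isomorphism is automatically a homeomorphism, as already observed for Stone MV-algebras in the proof of Theorem~\ref{main}.
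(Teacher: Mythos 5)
Your proposal is correct and follows essentially the same route as the paper: reduce to $n=2$, identify $\text{Max}_f(A_1\times A_2)$ with the disjoint union $\text{Max}_f(A_1)\cupdot\text{Max}_f(A_2)$ via the coordinate description of maximal ideals of a product, and then apply Theorem~\ref{main} to both sides. The extra justifications you supply (why no maximal ideal mixes both factors, and the topological nature of the isomorphism) are sound elaborations of steps the paper treats as standard.
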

\begin{proof}
It is enough to prove the result for $n=2$. We observe (as in classical ring theory) that maximal ideals of $A_1\times A_2$ are of the form $M_1\times A_2$ with $M_1$ a maximal ideal of $A_1$ or $A_1\times M_2$ with $M_2$ a maximal ideal of $A_2$. In addition since $A_1\times A_2/M_1\times A_2\cong A_1/M_1$ and $A_1\times A_2/A_1\times M_2\cong A_2/M_2$, it follows that: $$\text{Max}_{f}(A_1\times A_2)=\{M_1\times A_2:M_1\in \text{Max}_{f}(A_1)\}\cupdot \{A_1\times M_2:M_2\in \text{Max}_{f}(A_2)\}$$
From Theorem \ref{main}, we obtain that:
\[
\begin{aligned}
\reallywidehat{A_1\times A_2}&\cong \prod_{M\in \text{Max}_{f}(A_1\times A_2)}(A_1\times A_2)/M\\
&\cong \prod_{M_1\in \text{Max}_{f}(A_1)}(A_1\times A_2)/(M_1\times A_2)\times \prod_{M_2\in \text{Max}_{f}(A_2)}(A_1\times A_2)/(A_1\times M_2)\\
&\cong  \prod_{M_1\in \text{Max}_{f}(A_1)}A_1/M_1\times \prod_{M_2\in \text{Max}_{f}(A_2)}A_2/M_2\\
&\cong \widehat{A_1}\times \widehat{A_2}.
\end{aligned}
\]
\end{proof}
We would like to point out that the above does not extend to arbitrary products. Indeed, if the profinite completions preserves infinite product, then for every infinite set $X$, the Boolean algebra $\mathbf{2}^X$ would be isomorphic to its own profinite completion. However no infinite Boolean algebra is isomorphic to its own profinite completion.

By the Stone's most celebrated result, the Stone space completely determines the Boolean algebra. It follows from Corollary \ref{Bool} that two nonisomorphic Bolean algebras cannot have isomorphic profinite completions. This is however not the case for MV-algebras in general. For instance, nontrivial MV-algebras can have trivial profinite completions. Note that it follows from Theorem \ref{main} that an MV-algebra has a trivial profinite completion if and only if all its maximal ideals have infinite ranks. Infinite simple MV-algebras are the most evident examples of such MV-algebras. 
\section{Profinite MV-algebras vs Profinite completions vs MacNeille completions}
While any MV-algebra $A$ with non-principal maximal ideals of finite ranks is not isomorphic to its profinite completion $\widehat{A}$ (Proposition \ref{self}), it remains possible to have $A\cong \widehat{B}$ for some MV-algebra $B$. This motivates the important question of which (profinite) MV-algebras are profinite completions of some MV-algebras. It is known \cite[Corollary 5.10]{BM} that every profinite Boolean algebra is a profinite completion of some Boolean algebra. It is also known that a profinite bounded distributive lattice is isomorphic to a profinite completion if and only if it is isomorphic to the lattice of upsets of a representable poset \cite[Theorem 5.3]{BM}. 

In this section, we investigate this question for MV-algebras.

Recall that the radical $\Rad(A)$ of an MV-algebra $A$ is the intersection of all its maximal ideals. An MV-algebra $A$ is called semisimple if $\Rad(A)=\{0\}$.
\begin{prop}\label{semi}
For every MV-algebra $A$, $A/\Rad(A)$ and $A$ have isomorphic profinite completions, that is:
$$\reallywidehat{A/\Rad(A)}\cong \widehat{A}$$
\end{prop}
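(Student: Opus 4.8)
The plan is to reduce the statement to Theorem \ref{main} and then match the two product representations factor by factor through the correspondence between maximal ideals. Write $\bar A:=A/\Rad(A)$. Applying Theorem \ref{main} to both algebras gives
$$\widehat{A}\cong\prod_{M\in\text{Max}_{f}(A)}A/M\qquad\text{and}\qquad\reallywidehat{\bar A}\cong\prod_{N\in\text{Max}_{f}(\bar A)}\bar A/N,$$
so it suffices to produce a bijection $\text{Max}_{f}(A)\to\text{Max}_{f}(\bar A)$ that identifies the corresponding simple quotients. The guiding observation is that $\Rad(A)=\bigcap\text{Max}(A)$ is contained in every maximal ideal of $A$, so quotienting by it should leave the lattice of maximal ideals and their residue algebras untouched.

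Next I would invoke the correspondence theorem for MV-algebras. Since $\Rad(A)\subseteq M$ for each maximal $M$, the assignment $M\mapsto \bar M:=M/\Rad(A)$ is an inclusion-preserving bijection from $\text{Max}(A)$ onto $\text{Max}(\bar A)$, and the third isomorphism theorem yields $\bar A/\bar M\cong A/M$ for every such $M$. Because these quotients are isomorphic, $A/M$ is finite precisely when $\bar A/\bar M$ is finite, and when finite the two chains $\L_n$ coincide; hence the bijection restricts to a rank-preserving bijection $\text{Max}_{f}(A)\to\text{Max}_{f}(\bar A)$, under which $\bar A/\bar M\cong A/M$ componentwise. (One may note in passing that every $I\in\text{id}_f(A)$ already contains $\Rad(A)$, since a finite MV-algebra is a product of the simple chains of Proposition \ref{frank}(1.) and is therefore semisimple; this makes transparent why no finite-index data is lost.)

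Finally I would assemble the isomorphism: reindexing the product for $\reallywidehat{\bar A}$ along the bijection $M\mapsto\bar M$ and substituting $\bar A/\bar M\cong A/M$ in each factor yields an algebraic isomorphism between $\prod_{M\in\text{Max}_{f}(\bar A)}\bar A/\bar M$ and $\prod_{M\in\text{Max}_{f}(A)}A/M$, and thus $\reallywidehat{\bar A}\cong\widehat{A}$. Since both sides are profinite (Stone) MV-algebras, the final remark in the proof of Theorem \ref{main}, which asserts that any isomorphism between such algebras is automatically a homeomorphism, upgrades this to an isomorphism of topological MV-algebras.

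The only genuine content lies in the second step, namely checking that passing to $A/\Rad(A)$ neither creates nor destroys maximal ideals of finite rank and preserves the associated simple quotients; once Theorem \ref{main} is in hand, everything else is a formal reindexing of products. I do not expect a serious obstacle here, as the correspondence theorem and the behaviour of the radical under quotients are standard facts for MV-algebras.
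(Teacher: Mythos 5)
Your proposal is correct and follows essentially the same route as the paper: apply Theorem \ref{main} to both algebras, use the correspondence $M\mapsto M/\Rad(A)$ between maximal ideals (all of which contain $\Rad(A)$) together with the third isomorphism theorem to match the factors, and conclude. The extra remarks on finite-index ideals and the topological upgrade are harmless additions but not needed beyond what the paper already records.
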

\begin{proof}
We recall that maximal ideals of $A/\Rad(A)$ are of the form $M/\Rad(A)$, where $M$ is a maximal ideal of $A$ containing $\Rad(A)$.  Since $\Rad(A)$ is the intersection of all maximal ideals of $A$, then maximal ideals of $A/\Rad(A)$ are exactly of the form $M/\Rad(A)$, where $M$ is a maximal ideal of $A$. In addition since $(A/\Rad(A))/(M/\Rad(A))\cong A/M$, $M$ has finite rank in $A$ if and only if $M/\Rad(A)$ has finite rank (as both ranks are equal). Thus by Theorem \ref{main}, 
\[ 
\reallywidehat{A/\Rad(A)}\cong \prod_{M\in \text{Max}_{f}(A)} (A/\Rad(A))/(M/\Rad(A))\cong \prod_{M\in \text{Max}_{f}(A)} A/M\cong \widehat{A}.
\]
\end{proof}

Note that given any nonempty topological space $X$, the set $\Cont(X)$ of all continuous functions from $X\to [0,1]$ is an MV-algebra under pointwise operations. Recall that a subalgebra $B$ of $\Cont(X)$ is called separating if for every $x, x'\in X$ with $x\ne x'$, there exists $f\in B$ such that $f(x)\ne f(x')$.

Recall that if $A$ is a profinite MV-algebra, then $A\cong \prod_{x\in X}\L_{n_x}$ for some set $X$, and the set of integers $\eta(A):=\{n_x:x\in X\}$ is uniquely determined by $A$.  
\begin{prop}\label{separate}
Let $A:= \prod_{x\in X}\L_{n_x}$ be a profinite MV-algebra.\\ Then the following assertions are equivalent.
\begin{itemize}
\item[1.] $A$ is isomorphic to the profinite completion of some MV-algebra $B$;
\item[2.] There exists a compact Hausdorff space $Y$ containing a dense copy $X'$ of $X$ and a separating subalgebra $A'$ of $\Cont(Y)$ satisfying:
\begin{itemize}
\item[(i)] For every $x\in X$ (when $X$ is identified with $X'$), $J_x$ has rank $n_x$ in $A'$, where $J_x:=\{f\in A':f(x)=0\}$; and
\item[(ii)] For every $y\in Y\setminus X$, $J_y$ has infinite rank in $A'$;
\end{itemize}
\item[3.] The condition 2 holds for some Stone space $Y$.
\item[4.] $A$ is isomorphic to the profinite completion of some sub-MV-algebra of $A$.
\end{itemize}
\end{prop}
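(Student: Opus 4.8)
The plan is to prove the four conditions equivalent through the cycle $(3)\Rightarrow(2)\Rightarrow(1)\Rightarrow(4)\Rightarrow(3)$. Two links come for free: a Stone space is compact Hausdorff, so $(3)\Rightarrow(2)$ is immediate, and a sub-MV-algebra is in particular an MV-algebra, so $(4)\Rightarrow(1)$ holds trivially as well (though the cycle itself uses the reverse implication $(1)\Rightarrow(4)$). The genuine content therefore sits in the representation step $(2)\Rightarrow(1)$, the internalization step $(1)\Rightarrow(4)$, and the zero-dimensionalization step $(4)\Rightarrow(3)$, the last of which I expect to be the crux.

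For $(2)\Rightarrow(1)$ I would use the semisimple representation theory. A separating subalgebra $A'\subseteq\Cont(Y)$ is semisimple, and the assignment $y\mapsto J_y$ is a homeomorphism $Y\to\text{Max}(A')$: surjectivity and continuity are standard, injectivity is the MV-algebraic Stone--Weierstrass phenomenon (applying MV-terms to a separating function produces one vanishing at one point but not the other), and bicontinuity follows from compactness of $Y$ against the Hausdorff space $\text{Max}(A')$. Under this identification, hypotheses (i) and (ii) say exactly that the finite-rank maximal ideals of $A'$ are $\{J_x:x\in X\}$ with $A'/J_x\cong\L_{n_x}$. Theorem \ref{main} then gives $\widehat{A'}\cong\prod_{x\in X}\L_{n_x}=A$, which is $(1)$ with $B=A'$.

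For $(1)\Rightarrow(4)$, write $A\cong\widehat B$ and let $\iota\colon B\to\widehat B\cong A$ be the canonical map into the inverse limit, with $B':=\iota(B)$ a sub-MV-algebra of $A$. Its kernel is $K=\bigcap_{M\in\text{Max}_f(B)}M$, and since every finite-rank maximal ideal of $B$ already contains $K$, the correspondence theorem matches $\text{Max}_f(B/K)$ with $\text{Max}_f(B)$ and their simple quotients; Theorem \ref{main} then yields $\widehat{B'}\cong\widehat{B/K}\cong\widehat B\cong A$. By Proposition \ref{semi} one may just as well pass to $B/\Rad(B)$ first, so that $B'$ is a semisimple subalgebra of $A$ on which $X=\text{Max}_f(B')$ is dense in $Y_0:=\text{Max}(B')$, the density being equivalent to injectivity of $B'\to\widehat{B'}$. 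This establishes $(4)$.

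The main obstacle is $(4)\Rightarrow(3)$, where a compact Hausdorff witness must be replaced by a Stone one. Via $(2)\Rightarrow(1)$, condition $(3)$ is equivalent to producing some MV-algebra $C$ with $\widehat C\cong A$ whose maximal spectrum is a Stone space; the naive attempt of enlarging $B'$ by the whole Boolean center $\mathcal P(X)$ fails, because a nonprincipal ultrafilter on $X$ converging in $Y_0$ to a finite-rank point would create a spurious finite-rank maximal ideal and destroy $\widehat C\cong A$. The fix I propose is to saturate $B'$ only by a carefully chosen Boolean subalgebra $D\subseteq\mathcal P(X)$, namely the traces on $X$ of those sets whose boundary in $Y_0$ is disjoint from $X$; equivalently, to pull $B'$ back along a zero-dimensional cover $\pi\colon Y\to Y_0$ that is bijective over $X$ and carries $Y\setminus\pi^{-1}(X)$ into the infinite-rank locus $Y_0\setminus X$, and then to adjoin the clopens of $Y$ to separate points. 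Setting $C:=\langle B',D\rangle\subseteq A$ and $Y:=\text{Max}(C)$, adjoining idempotents leaves the finite simple quotients over $X$ unchanged (giving (i)) and cannot make an infinite-rank quotient finite (giving (ii)), while the clopens of the Stone space $Y$ separate its points. The delicate points--that such a $D$ separates the points of $X$, that every ultrafilter of $D$ meeting $Y_0\setminus X$ extends $B'$ to infinite rank, and that $\text{Max}(C)$ is thereby zero-dimensional with finite-rank locus exactly $X$--are where I expect the real work to lie; the one-dimensional model to keep in mind is the selective double arrow that splits $[0,1]$ at its irrational (infinite-rank) points while keeping the rational (finite-rank) points simple.
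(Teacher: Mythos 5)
Your cycle $(3)\Rightarrow(2)\Rightarrow(1)\Rightarrow(4)\Rightarrow(3)$ is organized differently from the paper's $(1)\Rightarrow(2)$, $(2)\Leftrightarrow(3)$, $(2)\Rightarrow(4)$, $(4)\Rightarrow(1)$, and three of your four links are sound. Your $(2)\Rightarrow(1)$ is essentially the paper's argument read off from Theorem \ref{main} and the identification $\text{Max}(A')=\{J_y:y\in Y\}$ from \cite[Thm.\ 3.4.3]{C2}; your $(1)\Rightarrow(4)$, via the image $B'$ of the canonical map $B\to\widehat{B}$ and the observation that killing $K=\bigcap_{M\in\text{Max}_f(B)}M$ changes neither $\text{Max}_f$ nor the ranks, is a clean direct route that the paper does not take (it reaches $(4)$ from $(2)$ by restricting $\Cont(Y)$ to the dense set $X$ and embedding $A'$ into $\{f\in[0,1]^X:f(x)\in\L_{n_x}\}=A$).

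The genuine gap is $(4)\Rightarrow(3)$, which you correctly identify as the crux but do not prove. What you offer there is a programme --- saturate $B'$ by a Boolean algebra $D$ of ``traces of sets whose boundary avoids $X$'', or pull back along a zero-dimensional cover --- together with an explicit list of unverified claims: that $D$ separates the points of $X$, that no ultrafilter of $D$ creates a spurious finite-rank maximal ideal, and that $\text{Max}(\langle B',D\rangle)$ is zero-dimensional with finite-rank locus exactly $X$. These are precisely the assertions that would constitute the proof, and the danger you yourself flag (a nonprincipal ultrafilter on $X$ accumulating at a finite-rank point) shows they are not formalities. The paper sidesteps this entire construction with one citation: by \cite[Cor.\ 3.7]{Glus}, every semisimple MV-algebra is isomorphic to a separating subalgebra of $\Cont(Y)$ for some \emph{Stone} space $Y$, so the witness required in $(3)$ is obtained by re-representing the semisimple algebra $A'$ of condition $(2)$ rather than by enlarging it inside $A$. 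Unless you either invoke such a representation theorem or actually carry out the saturation argument (verifying the three properties above), the equivalence with $(3)$ is not established; as written, your argument proves only the equivalence of $(1)$, $(2)$ and $(4)$.
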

\begin{proof}
$1.\Rightarrow 2$.: Suppose that $A$ is isomorphic to the profinite completion of some MV-algebra $B$. Then by Proposition \ref{semi}, $A$ is isomorphic to the profinite completion of $A':=B/\Rad(B)$. But $A'$ is semisimple, therefore by \cite[Cor. 3.6.8]{C2}, $A'$ is isomorphic to a separating MV-algebra of $[0,1]$-valued continuous functions on some nonempty compact Hausdorff space $Y$, with pointwise operations. By \cite[Thm. 3.4.3]{C2}, the maximal ideals of $A'$ are exactly $J_y$, $y\in Y$. Let $X':=\{y\in Y: J_y \; \text{has finite rank in}\; A'\}$. Then by Theorem \ref{main}, $A\cong \widehat{A'}\cong \prod_{y\in X'}A'/J_y$. It follows that there is a bijection $\tau:X\to X'$ such that $n_x=rank(J_{\tau(x)})$ for all $x\in X$. Clearly, for all $y\in Y\setminus X$, $J_y$ has infinite rank in $A'$. Note that the density clause can be added by simply replacing $Y$ by the closure of $X'$ if needed.\\
$2. \Leftrightarrow 3$.: This follows form the fact that every semi-simple MV-algebra is isomorphic to a separating subalgebra $A'$ of $\Cont(Y)$, where $Y$ is a Stone space as proved in \cite[Cor. 3.7]{Glus}.\\
$2. \Rightarrow 4$.: Suppose that $A'$ is an MV-algebra satisfying the condition 2. Since $A'/J_x\cong \L_{n_x}$ for all $x\in X$, then for every $f\in A'$, $f(x)\in \L_{n_x}$ for all $x\in X$. On the other hand, by density, the map $f\mapsto f_{|X} $ is one-to-one from $\Cont(Y)$ into $\Cont(X)$, which is in turn a sub-MV-algebra of $[0,1]^X$. This allows us to identify $\Cont(Y)$ to a sub-MV-algebra of $[0,1]^X$. Hence, $A'$ is a sub-MV-algebra of $\{f\in [0,1]^X:f(x)\in \L_{n_x} \; \text{for all} \; x\in X\}=A$. Finally, note that  by Theorem \ref{main}  $\widehat{A'}\cong  \prod_{y\in X'}A'/J_y\cong \prod_{x\in X}\L_{n_x}=A$.\\
$4. \Rightarrow 1$.: Is clear.
\end{proof}
\begin{thm}\label{main2}
Suppose that $A$ is a profinite MV-algebras such that there exists $n_0\in \eta(A)$ with the property that $n_0-1$ divides all but finitely many $n\in \eta(A)$. Then there exists an MV-algebra $B$ such that $A\cong \widehat{B}$
\end{thm}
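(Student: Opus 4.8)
The plan is to use the characterization from Proposition \ref{separate}: it suffices to produce a compact Hausdorff space $Y$ containing a dense copy of $X$, together with a separating subalgebra $A'$ of $\Cont(Y)$ satisfying the rank conditions (i) and (ii). By the equivalence $2.\Leftrightarrow 3.$, working with a compact Hausdorff $Y$ is enough, and the divisibility hypothesis on $n_0$ is exactly what should let us build functions realizing the required finite ranks $n_x$ at points of $X$ while forcing infinite rank at the limit points added by compactification.

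First I would split $X$ according to the hypothesis. Let $X_{0}:=\{x\in X: (n_0-1)\mid n_x\}$, whose complement $X\setminus X_0$ is finite by assumption. The idea is to take $Y$ to be a one-point (or finite-point) compactification-type space built from $X$ equipped with the discrete topology: set $Y:=X\cup\{\infty\}$ with the topology in which points of $X$ are isolated and neighborhoods of $\infty$ are complements of finite sets. Then $Y$ is compact Hausdorff and $X$ is a dense (indeed open) copy of itself inside $Y$. The single added point $\infty$ is the only candidate for a maximal ideal of infinite rank, so condition (ii) will reduce to checking the rank of $J_\infty$.

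Next I would construct the separating subalgebra $A'\subseteq\Cont(Y)$. Writing $m:=n_0-1$, the natural candidate is to take $A'$ generated (as an MV-algebra, or described directly) by functions $f\colon Y\to[0,1]$ that are eventually, near $\infty$, constrained to land in $\frac{1}{m}\{0,1,\dots,m\}=\L_{n_0}$ evaluated at $\infty$, while at each $x\in X_0$ they may take all values in $\L_{n_x}$ (possible precisely because $m\mid n_x$, so $\L_{n_0}\hookrightarrow\L_{n_x}$ compatibly). Concretely, a good model is
\[
A':=\Bigl\{f\in\prod_{x\in X}\L_{n_x}: \lim_{x\to\infty}f(x)\ \text{exists and lies in}\ \tfrac{1}{m}\{0,\dots,m\}\Bigr\},
\]
where the limit is taken in the compactification topology; functions in $A'$ extend continuously to $\infty$ by their limiting value, giving $A'\subseteq\Cont(Y)$. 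Separation on $X$ is immediate from coordinate functions, and separation of $\infty$ from each $x$ follows because some $f$ can take a value at $x$ differing from its prescribed limit. For the rank computations: for $x\in X_0$ one checks $A'/J_x\cong\L_{n_x}$ by showing the evaluation-at-$x$ map is onto $\L_{n_x}$ (surjectivity uses $m\mid n_x$ so that the limit constraint does not obstruct hitting every element of $\L_{n_x}$), and for the finitely many $x\in X\setminus X_0$ a small local modification of $A'$ at those isolated points restores rank $n_x$ without affecting anything else. Finally $A'/J_\infty$ must be shown infinite: the values $\{f(\infty):f\in A'\}$ form a copy of $\L_{n_0}$ but the kernel $J_\infty$ fails to be a \emph{maximal} ideal of finite rank because infinitely many independent germs accumulate at $\infty$; more precisely one shows $A'/J_\infty$ contains elements of arbitrarily large order, hence is infinite.

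The main obstacle I expect is the last point, verifying that $J_\infty$ (and any other maximal ideal corresponding to points of $Y\setminus X'$) genuinely has infinite rank, rather than accidentally collapsing to a finite quotient. The subtlety is that although each $f\in A'$ satisfies $f(\infty)\in\L_{n_0}$, the \emph{quotient} $A'/J_\infty$ is governed by the germ of $f$ at $\infty$ and may see infinitely many distinct values accumulating from the $\L_{n_x}$; one must argue carefully that these germs do not all reduce modulo a single $\L_{n}$, for which the standard tool is to exhibit an infinite chain or an element generating an infinite subalgebra in $A'/J_\infty$. A secondary technical point is checking that $A'$ as defined is closed under $\oplus$ and $\neg$ and that the limit condition is preserved—routine, since $\oplus$, $\neg$, $\vee$, $\wedge$ are continuous and the constraint set $\tfrac1m\{0,\dots,m\}=\L_{n_0}$ is a subalgebra of $[0,1]$. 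Once these are in place, Proposition \ref{separate} (implication $3.\Rightarrow 1.$, via $2.\Leftrightarrow 3.$) yields the desired $B$ with $A\cong\widehat{B}$.
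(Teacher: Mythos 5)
There is a genuine gap, and it is exactly at the point you flagged as the ``main obstacle'': with your construction the quotient $A'/J_\infty$ really does collapse to a finite algebra, and this cannot be repaired. By your own definition of $A'$, every $f\in A'$ satisfies $f(\infty)=\lim_{x\to\infty}f(x)\in\L_{n_0}$, and evaluation at $\infty$ is a homomorphism $A'\to\L_{n_0}$ with kernel exactly $J_\infty$; it is onto $\L_{n_0}$ (take $f$ equal to a constant $t\in\L_{n_0}$ off the finite exceptional set and $0$ on it). Hence $A'/J_\infty\cong\L_{n_0}$, so $J_\infty$ is a maximal ideal of \emph{finite} rank $n_0$, condition (ii) of Proposition \ref{separate} fails, and Theorem \ref{main} gives $\widehat{A'}\cong\bigl(\prod_{x\in X}\L_{n_x}\bigr)\times\L_{n_0}$, which in general is not isomorphic to $A$ (e.g.\ if exactly one $x$ has $n_x=n_0$, the multisets of ranks differ, cf.\ \cite[Cor.~3.4]{jbn}). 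Your proposed escape --- ``one shows $A'/J_\infty$ contains elements of arbitrarily large order'' --- contradicts the limit constraint you imposed; and dropping that constraint (allowing arbitrary limits in $[0,1]$) does not help either, since e.g.\ in the Boolean case $n_x\equiv 2$ the only achievable limits are $0$ and $1$ and $J_\infty$ again has rank $2$. The structural point is that \emph{any} genuinely new point $y\in Y\setminus X$ contributes a maximal ideal which must have infinite rank, and a one-point compactification of a discrete $X$ with bounded $\eta(A)$ can never achieve that.

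The paper's proof avoids this by not adjoining a new point at all: it chooses $x_0\in X$ with $n_{x_0}=n_0$ and topologizes $X$ itself as the one-point compactification of the discrete space $X\setminus\{x_0\}$, with $x_0$ playing the role of $\infty$. Then $Y=X$, so $Y\setminus X=\emptyset$ and condition (ii) is vacuous; the isolated points get rank $n_x$ from indicator-type functions, and the divisibility hypothesis is used precisely to show that evaluation at $x_0$ is onto $\L_{n_0}$ (a function constantly $t\in\L_{n_0}$ off the finite exceptional set lies in $B$ because $\L_{n_0}\subseteq\L_{n_x}$ there). Your setup through Proposition \ref{separate} and your use of the divisibility hypothesis are on the right track, but the choice of compactification is the crux, and yours does not work.
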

\begin{proof}
If $X$ is finite, so is $A$ and in this case $A$ is isomorphic to its profinite completion. We may assume that $X$ is infinite.
Let $x_0\in X$ such that $n_{x_0}\in \eta(A)$ with the property that $n_{x_0}-1$ divides all but finitely many $n\in \eta(A)$. Then $\L_{n_{x_0}}$ is a subalgebra of all but finitely many $\L_{n_x}$, $x\in X$. We topologize $X$ so that the space obtained is the one-point compactification of the discrete space $X\setminus \{x_0\}$. Note that $X$ is infinite, therefore under the discrete topology $X\setminus \{x_0\}$ is a locally compact and Haussdorff space, which is not compact. More explicitly, a subset $U$ of $X$ is open if $U$ or $X\setminus U$ is a finite subset of $X\setminus \{x_0\}$. Then $X$ is a compact Haussdorff (in fact Stone) space. Let  \[ B=\{f\in \Cont(X):f(x)\in \L_{n_x} \; \text{for all} \; x\in X\}\]
Then as $X$ is Stone, the Boolean center $B(\Cont(X))$ is a separating subalgebra of $\Cont(X)$. It follows that $B$ is also a separating subalgebra of $\Cont(X)$ since $B(\Cont(X))=\{f\in \Cont(X):f(x)\in \{0,1\} \; \text{for all} \; x\in X\}=B(B)\subseteq B$. We claim that $B/J_x\cong \L_{n_x}$. To see this, for each $x\in X$, we consider $\varphi_x:B\to \L_{n_x}$ defined by $\varphi_x(f)=f(x)$. Then, $\varphi_x$ is an MV-algebra homomorphism and $\Ker(\varphi_x)=J_x$. It remains to show that each $\varphi_x$ is onto. First we consider the case $x\ne x_0$. For $t\in \L_{n_x}$, define $f:X\to [0,1]$ by:
\[
 \begin{aligned}
f(a)=
\left\{\begin{array}{ll}
 t & \ \ \mbox{if} \ \ a=x;\\
 0&   \ \  \mbox{otherwise}.\\
\end{array}\right.\\
 \end{aligned}
\]
It is easy to see that $f$ is continuous and clearly $\varphi_x(f)=t$.

For $x_0$, recall that $\L_{n_{x_0}}\subseteq \L_{n_x}$ for all $x\in X$, except possibly for $x\in S$, where $S$ is a finite subset of $X$. Now, let $t\in \L_{n_{x_0}}$ and define $f:X\to [0,1]$ by:
\[
 \begin{aligned}
f(a)=
\left\{\begin{array}{ll}
 t & \ \ \mbox{if} \ \ a\notin S;\\
 0&   \ \  \mbox{if} \ \ a\in S.\\
\end{array}\right.\\
 \end{aligned}
\]
It is easy to see that $f$ is continuous and clearly $\varphi_{x_0}(f)=t$.

The conclusion now follows from Proposition \ref{separate}.
\end{proof}
One should observe from Theorem \ref{main2} that if the 2-element Boolean algebra is among the factors of a profinite MV-algebra $A$, then there exists an MV-algebra $B$ such that $A\cong \widehat{B}$. In particular, every profinite Boolean algebra is the profinite completion of some Boolean algebra, which is well known (see for e.g.,\cite[Corollary 5.10]{BM}).
\begin{prop}\label{bounded}
Suppose that $A$ is a profinite MV-algebra with $\eta(A)$ bounded. Then $A$ is isomorphic to the profinite completion of some MV-algebra.
\end{prop}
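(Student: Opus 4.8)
The plan is to reduce immediately to Theorem \ref{main2}, whose hypothesis turns out to be automatically fulfilled once one unpacks what ``bounded'' means. The central observation is that boundedness of $\eta(A)$ forces it to be \emph{finite}: by definition $\eta(A)=\{n_x:x\in X\}$ is a set of integers with each $n_x\ge 2$, so it is already bounded below by $2$; if it is moreover bounded above, then it is a bounded set of integers and hence a finite set.

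Once finiteness is in hand, I would first dispose of the degenerate case. If $X=\emptyset$ (equivalently $A$ is trivial), then $A$ is finite and therefore isomorphic to its own profinite completion, so there is nothing to prove. Assume then that $\eta(A)\ne\emptyset$ and pick any element $n_0\in\eta(A)$, say $n_0=n_{x_0}$ for some $x_0\in X$.

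The key step is to verify that this $n_0$ satisfies the hypothesis of Theorem \ref{main2}, namely that $n_0-1$ divides all but finitely many $n\in\eta(A)$. But the set of $n\in\eta(A)$ that fail this divisibility is a subset of $\eta(A)$, which we have just shown to be finite; hence at most finitely many $n\in\eta(A)$ can fail, and the required ``all but finitely many'' condition holds \emph{vacuously}, irrespective of which $n_0$ was chosen. Thus Theorem \ref{main2} applies and produces an MV-algebra $B$ with $A\cong\widehat{B}$, as desired.

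I do not anticipate any genuine obstacle here: the entire mathematical content is the elementary remark that a set of integers $\ge 2$ which is bounded above is finite, after which the quantifier ``all but finitely many'' in Theorem \ref{main2} is satisfied trivially. The only points worth a word of care are the empty/trivial case and the purely formal observation that a divisibility condition imposed over a finite index set can never fail for infinitely many indices, so that no actual divisibility relation among the $n_x$ needs to be arranged by hand.
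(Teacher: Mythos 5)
Your reduction rests on reading the hypothesis of Theorem \ref{main2} as a condition on the \emph{set} $\eta(A)$ and then observing that any condition holds for all but finitely many elements of a finite set. Formally this matches the wording of Theorem \ref{main2}, but it does not match what the proof of that theorem actually uses, and this is where the gap lies. In the proof of Theorem \ref{main2} the hypothesis is immediately converted into the claim that $\L_{n_{x_0}}$ is a subalgebra of all but finitely many $\L_{n_x}$ \emph{indexed over} $x\in X$ (this is the finite exceptional set $S\subseteq X$ used to build the preimages of $t\in\L_{n_{x_0}}$ under $\varphi_{x_0}$). A single ``bad'' value $n\in\eta(A)$ can be attained by infinitely many indices $x$, so finiteness of $\eta(A)$ does not yield finiteness of $S$. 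Concretely, take $A=\L_3^{\mathbb{N}}\times\L_4^{\mathbb{N}}$, so $\eta(A)=\{3,4\}$ is bounded. Since $\L_m$ embeds in $\L_n$ iff $(m-1)\mid(n-1)$, and $2\nmid 3$, $3\nmid 2$, neither $\L_3$ nor $\L_4$ is a subalgebra of the other; whichever $x_0$ you choose, $\L_{n_{x_0}}$ fails to embed into infinitely many factors, and the construction in Theorem \ref{main2} of a continuous $f$ with $\varphi_{x_0}(f)=t$ breaks down. So your ``vacuous'' verification invokes Theorem \ref{main2} in a case its proof does not cover.

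The conclusion is nevertheless true, and the paper repairs exactly this issue by a different route: write $\eta(A)=\{n_1,\dots,n_k\}$, split $X$ into the level sets $X_i=\{x:n_x=n_i\}$, so that $A\cong\prod_{i=1}^k A_i$ with $A_i=\L_{n_i}^{X_i}$. Each $A_i$ has constant exponent, so the divisibility condition of Theorem \ref{main2} holds for \emph{every} index, not just cofinitely many, and each $A_i\cong\widehat{B_i}$ for some $B_i$. One then needs Proposition \ref{preserve} (profinite completion commutes with finite products) to conclude $A\cong\reallywidehat{\prod_i B_i}$. Your argument omits this decomposition and the appeal to Proposition \ref{preserve}, and these are precisely the ingredients needed to make the proof go through.
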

\begin{proof}
Since $\eta(A)$ is finite, let $\eta(A):=\{n_1,n_2,\ldots, n_k\}$. For each $i=1,2,\ldots, k$, let $X_i=\{x\in X:n_x=n_i\}$ and $A_i=\L_{n_i}^{X_i}$. Then, $A\cong \prod_{i=1}^kA_i$, each $A_i$ clearly satisfies the condition of Theorem \ref{main2}. Therefore, by Theorem \ref{main2} each $A_i$ is isomorphic to the profinite completion of some MV-algebra $B_i$. It follows from Proposition \ref{preserve} that $A\cong \reallywidehat{\prod_{i=1}^nB_i}$.
\end{proof}
While Theorem \ref{main2} and Proposition \ref{bounded} cover a large class of profinite MV-algebras, it remains unclear whether there are profinite MV-algebras that do not satisfy the conditions of Proposition \ref{separate}. This situation is parallel to that of Heyting algebras as described in the comments after the proof of \cite[Corollary 5.9]{BM}.

The final aspect of this article is devoted to comparing the profinite and the MacNeille completions of MV-algebras. Recall that given a lattice $L$, its MacNeille completion is the unique (up to isomorphism) complete lattice $\overline{L}$ with a lattice embedding $\beta:L\to \overline{L}$ such that $\beta[L]$ is both join-dense and meet-dense in $\overline{L}$. More precisely, we wish to investigate under what circumstances the two completions coincide.  For instance, it is known that for a Heyting algebra $H$, the two completions coincide if and only if $H$ is completely join-prime generated and $\langle J^{\infty}(H), \geq \rangle$ is order-isomorphic to the poset of prime filters of $H$ of finite index \cite[Thm. 4.4]{BV}. It is also known that if $L$ is a bounded distributive lattice with Priestley space $X$, then the two completions are isomorphic if and only if $X_0$ is dense in $X$ and $X_0$ is order-isomorphic to $X$, where $X_0$ denotes the subset of order-isolated points in $X$ \cite[Thm. 4.2]{BM}.

For MV-algebras, we start our analysis by observing that the MacNeille completion of an MV-algebra is an MV-algebra if and only if it is semisimple \cite[Thm. 6.3]{Ho}. Therefore, our comparison of the two completions is restricted to semisimple MV-algebras. We shall denote the MacNeille completion of an MV-algebra $A$ by $\overline{A}$. 

Recall that one can define a partial addition $+$ on any MV-algebra $\langle A,\oplus ,\neg, 0\rangle$ by $x+y=x\oplus y$ for all $x, y\in A$ with $x\leq \neg y$. For every $n\geq 1$ integer and $a\in A$, if $a+\cdots +a$ ($n$ times) is defined, we denote it by $nx$. For every $a\in A$, the order of $a$ is defined as $|a|:=\text{Sup}\{n\geq 1: na \; \text{is defined in}\; A\}$. An MV-algebra $A$ is Archimedean if for every $a\in A\setminus \{0\}$, $|a|<\infty$. In addition, Archimedean MV-algebras and semisimple MV-algebras coincide (see for e.g., \cite{C2}).

We have the following characterization of semisimple MV-algebras for which the profinite and MacNeille completions coincide.
\begin{prop}\label{Mac}
For every semisimple MV-algebra $A$, its profinite completion $\widehat{A}$ is isomorphic to its MacNeille completion $\overline{A}$ if and only if $A$ is atomic and there exists a bijection $\tau$, from the set $\mathfrak{a}(A)$ of atoms of $A$ onto its set $\text{Max}_{f}(A)$ of maximal ideals of finite rank such that $\rank(\tau(a))=|a|+1$ for all atoms $a$ of $A$.
\end{prop}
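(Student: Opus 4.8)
\emph{Proof proposal.} The plan is to compute both completions explicitly as products of finite {\L}ukasiewicz chains and then read off the matching condition from the uniqueness of such product decompositions. By Theorem \ref{main} we already have $\widehat{A}\cong \prod_{M\in \text{Max}_f(A)}\L_{\text{rank}(M)}$, an atomic complete MV-algebra whose atoms are indexed by $\text{Max}_f(A)$, the atom at coordinate $M$ having order $\text{rank}(M)-1$. The heart of the argument is therefore to obtain a parallel description of $\overline{A}$: that for atomic semisimple $A$ one has $\overline{A}\cong \prod_{a\in \mathfrak{a}(A)}\L_{|a|+1}$. Granting this, any MV-isomorphism $\widehat{A}\cong \overline{A}$ must carry atoms to atoms and preserve the order $|\cdot|$ (which is defined purely from the partial addition), so it identifies the multiset $\{\text{rank}(M):M\in \text{Max}_f(A)\}$ with the multiset $\{|a|+1:a\in \mathfrak{a}(A)\}$; matching the two fibres over each integer $n$ then yields the required rank-preserving bijection $\tau$. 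Conversely, reindexing $\prod_{a}\L_{|a|+1}$ along a given $\tau$ produces exactly $\prod_{M}\L_{\text{rank}(M)}$, so $\overline{A}\cong \widehat{A}$.

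To set up the description of $\overline{A}$, I would first show that the MacNeille completion interacts well with atomicity: since $A$ is join-dense in $\overline{A}$, every atom of $\overline{A}$ already lies in $A$, and conversely atoms of $A$ remain atoms of $\overline{A}$; hence $\overline{A}$ is atomic iff $A$ is, with $\mathfrak{a}(\overline{A})=\mathfrak{a}(A)$. In particular, if $\widehat{A}\cong\overline{A}$ then $\overline{A}$ is atomic (as $\widehat{A}$ is), forcing $A$ to be atomic; this disposes of the non-atomic case of the equivalence.

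The technical core is a local analysis of a single atom $a$ of the semisimple algebra $A$, viewed inside its faithful representation $A\hookrightarrow \prod_{M\in\text{Max}(A)}A/M$. The key claim is that $a$ is supported at a unique maximal ideal $M_a$ (that is, $a\notin M_a$ but $a\in M$ for every other maximal $M$): were $a$ nonzero modulo two distinct maximal ideals $M_1,M_2$, choosing $c_0\in M_1\setminus M_2$ and replacing it by a large multiple $c=nc_0$ (so that $[c]_{M_2}=1$ while $[c]_{M_1}=0$), the element $a\wedge c$ would satisfy $0\ne a\wedge c\ne a$, contradicting minimality of $a$. A similar truncation argument shows $[a]_{M_a}$ is the least positive element of $A/M_a$; since $A/M_a$ is simple, hence a subalgebra of $[0,1]$, and has a least positive element, it must equal $\L_{m+1}$ with $[a]_{M_a}=1/m$, so $M_a$ has finite rank, and because $na$ is defined exactly while $n/m\le 1$ one gets $|a|=m$ and $\text{rank}(M_a)=|a|+1$.

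Finally I would assemble these local pieces. Because $A$ is atomic and semisimple, $\bigcap_{a}M_a=\{0\}$, so the restriction map $\rho:A\to \prod_{a\in\mathfrak{a}(A)}A/M_a=\prod_{a}\L_{|a|+1}$ is an MV-embedding. As $\rho(a)$ is the atom sitting at coordinate $a$, the multiples $j\,\rho(a)$ and their negations already lie in $\rho(A)$ and join- (resp.\ meet-) generate the whole product; hence $\rho$ is both join-dense and meet-dense into a complete MV-algebra, which identifies $\prod_a \L_{|a|+1}$ as the MacNeille completion $\overline{A}$. The main obstacle is precisely this structural lemma: establishing the single-support property of atoms and the exact equality $\text{rank}(M_a)=|a|+1$, after which both directions of the equivalence follow by the multiset bookkeeping for products of finite {\L}ukasiewicz chains described above.
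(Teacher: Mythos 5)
Your proposal is correct and follows the same overall strategy as the paper: compute $\widehat{A}$ via Theorem \ref{main}, show $\overline{A}\cong\prod_{a\in\mathfrak{a}(A)}\L_{|a|+1}$ for atomic semisimple $A$, transfer atomicity between $A$ and $\overline{A}$ by join-density, and then compare the two product decompositions. The difference is one of self-containment rather than of route: where the paper simply cites \cite[Thm.~6.4.20]{DP} for the description of $\overline{A}$ and \cite[Cor.~3.4]{jbn} for the uniqueness of a decomposition into finite {\L}ukasiewicz chains, you reprove both. Your local analysis of an atom $a$ --- single support $M_a$ via the truncation $a\wedge nc_0$, the observation that $[a]_{M_a}$ is the least positive element of the simple quotient so that $A/M_a\cong\L_{|a|+1}$, and the join-/meet-density of $\rho(A)$ in $\prod_a\L_{|a|+1}$ using the multiples $j\rho(a)$ and their negations --- is sound, as is the atom-and-order bookkeeping that replaces the citation of \cite[Cor.~3.4]{jbn} (atoms of a product of chains are the singly-supported minimal positive elements, and MV-isomorphisms preserve both atomhood and the order $|\cdot|$). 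The only point worth tightening is at the very end of your density argument: the lattice-theoretic universal property identifies $\prod_a\L_{|a|+1}$ with $\overline{A}$ as a \emph{lattice}, and one should add a word on why the MV-structure on $\overline{A}$ extending that of $A$ agrees with the product MV-structure (this is implicit in the paper's appeal to \cite{DP}). What your version buys is independence from the two external references; what the paper's buys is brevity.
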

\begin{proof} First, we observe as in the proof of \cite[Thm 6.4.20]{DP} that $A$ and $\overline{A}$ have exactly the same atoms, and that $A$ is atomic if and only if $\overline{A}$ is atomic. 

Assume that $\widehat{A}\cong \overline{A}$, then since $\widehat{A}$ is atomic, then $\overline{A}$ is atomic. It follows from the comments above that $A$ is atomic. Moreover, by \cite[Thm. 6.4.20]{DP}, $\overline{A}\cong \prod_{a\in \mathfrak{a}(A)}\L_{|a|+1}$. Thus, by Theorem \ref{main} $ \prod_{M\in \text{Max}_{f}(A)} A/M\cong \prod_{a\in \mathfrak{a}(A)}\L_{|a|+1}$, and it follows from \cite[Cor. 3.4]{jbn} that there is a bijection between $\mathfrak{a}(A)$ and $\text{Max}_{f}(A)$ with the prescribed requirement. 

Conversely, assume that $A$ is atomic and there exists a bijection $\tau:\mathfrak{a}(A)\to \text{Max}_{f}(A)$ such that $\rank(\tau(a))=|a|+1$ for all $a$. Then, $$\prod_{a\in \mathfrak{a}(A)}\L_{|a|+1}\cong  \prod_{M\in \text{Max}_{f}(A)} A/M$$ and it follows again from Theorem \ref{main} and \cite[Thm. 6.4.20]{DP} that  $\widehat{A}\cong \overline{A}$ as needed.
\end{proof}
Note that every Boolean algebra is a semisimple MV-algebra, all maximal ideals of whom have rank $2$ and all nonzero elements have order $1$. Applying Proposition \ref{Mac} to Boolean algebras and the fact that the profinite completion and the canonical completion of Boolean algebras are isomorphic (see for e.g., \cite[Thm. 2.11]{BGMM}, we obtain the following well known result.
\begin{cor}\cite[Cor. 4.7]{BV}\label{Bool1}
The profinite completion and the MacNeille completion of a Boolean algebra $B$ are isomorphic if and only if $B$ is atomic and its set of atoms and that of ultrafilters have the same cardinality.
\end{cor}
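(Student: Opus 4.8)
The plan is to specialize Proposition \ref{Mac} to the case where the semisimple MV-algebra is a Boolean algebra $B$, and to check that under this specialization all the numerical data collapse so that only a cardinality condition survives. Since a Boolean algebra is a semisimple MV-algebra, Proposition \ref{Mac} applies directly, and its MacNeille completion (taken lattice-theoretically) is again a complete Boolean algebra, hence a semisimple MV-algebra; this is what makes the two completions comparable in the MV setting.

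First I would compute the two invariants appearing in Proposition \ref{Mac}. On the one hand, for every maximal ideal $M$ of $B$ the quotient $B/M$ is a simple Boolean algebra, hence equal to $\mathbf{2}=\L_2$; therefore every maximal ideal has rank $2$ and $\text{Max}_f(B)=\text{Max}(B)$, which is precisely the set of ultrafilters of $B$. On the other hand, for a nonzero $a\in B$ the element $2a=a+a$ would require $a\leq \neg a$, i.e. $a\wedge a=a=0$, a contradiction; hence $2a$ is never defined for $a\neq 0$, so $|a|=1$ and $|a|+1=2$ for every atom $a$.

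Then I would feed these computations into Proposition \ref{Mac}. The proposition asserts that $\widehat{B}\cong \overline{B}$ if and only if $B$ is atomic and there is a bijection $\tau:\mathfrak{a}(B)\to \text{Max}_f(B)$ with $\rank(\tau(a))=|a|+1$ for every atom $a$. By the two computations above, the constraint $\rank(\tau(a))=|a|+1$ reads $2=2$ and is therefore automatically satisfied by any bijection whatsoever. Consequently the existence of such a $\tau$ is equivalent to the bare existence of a bijection between $\mathfrak{a}(B)$ and $\text{Max}_f(B)=\text{Max}(B)$, that is, to the equality of cardinalities $|\mathfrak{a}(B)|=|\text{Max}(B)|$. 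Identifying maximal ideals with ultrafilters yields the stated criterion.

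I do not anticipate a genuine obstacle here, since the argument is a direct degeneration of Proposition \ref{Mac}; the only points requiring care are the bookkeeping observations that every rank equals $2$ and every nonzero order equals $1$, together with the standard identification of the maximal ideals of a Boolean algebra with its ultrafilters, so that the MacNeille completion invoked in Proposition \ref{Mac} coincides with the familiar one.
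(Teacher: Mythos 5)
Your proposal is correct and follows essentially the same route as the paper: specialize Proposition \ref{Mac}, note that every maximal ideal of a Boolean algebra has rank $2$ and every nonzero element has order $1$, so the constraint $\rank(\tau(a))=|a|+1$ is vacuous and only the cardinality condition remains. The explicit check that $2a$ is undefined for $a\neq 0$ is a nice touch the paper leaves implicit.
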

We close with two examples, the first dealing with complete MV-algebras while the second deals with non complete MV-algebras.
\begin{ex}
Let $A=\prod_{x\in X}\L_{n_x}$. Then since $A$ is complete, $A$ is isomorphic to its MacNeille completion. Therefore $\widehat{A}\cong \overline{A}$ if and only if $\widehat{A}\cong A$, if and only if all maximal ideals of finite rank in $A$ are principal.
\end{ex}
\begin{ex}\label{noncomplete}
Let $B=\{f\in \prod_{n=1}^\infty \L_{n+1}: f \; \text{is convergent}\}$. Using the continuity of the operations $\oplus$ and $\neg$ of $[0,1]$, one can deduce that for every convergent sequences $f, g:\mathbb{N}\to [0,1]$, both $\neg f$ and $f\oplus g$ are convergent and $\lim \neg f=1-\lim f$, $\lim (f\oplus g)=\lim f\oplus \lim g$. It follows that $B$ is a sub-MV-algebra of $\prod_{n=1}^\infty \L_{n+1}$, and is therefore an MV-algebra on its own. 

We claim that $\widehat{B}\cong \overline{B}\cong \prod_{n=1}^\infty\L_{n+1}$.

We start by representing $B$ as a separating algebra of an MV-algebra of continuous functions on a Stone space. This makes it easy to determine all the maximal ideals of finite rank in $B$. Consider $X:=\mathbb{N}\cup \{\infty\}$ the one-point compactification of the discrete space $\mathbb{N}$. Let $B'=\{g\in \Cont(X): g(n)\in \L_{n+1}, \; \text{for all}\; n\geq 1\; \text{and}\}$. Consider the map $\Theta: B\to B'$ defined by $\Theta(f)=\tilde{f}$, where $\tilde{f}(n)=f(n)$ for all $n\geq 1$, and $\tilde{f}(\infty)=\lim f$. It is readily verified that $\Theta$ is an MV-algebras isomorphism and $B\cong B'$. In addition, $B'$ is a separating subalgebra of $\Cont(X)$, and it follows that its maximal ideals are of the form $J_{x}:=\{g\in \Cont(X): g(x)=0\}$, with $x\in X$ (\cite[Thm. 3.4.3]{C2}). Now, a similar justification to what was done in the proof of Theorem \label{main2} shows that $J_n$ has rank $n+1$ for all $n\geq 1$. It remains to show that $J_{\infty}$ has infinite rank. To see this, we first observe that for every $n\geq 1$, every closed interval centered at a point of $[0,1]$ of length $\frac{1}{n}$ contains a point of $\L_{n+1}$. It follows that for every $r\in [0,1]$ and $n\geq 1$, then there exists $x_n\in \L_{n+1}$ such that $|x_n-r|\leq \frac{1}{2n}$. Consequently, if one defines $f\in B$ by $f(n)=x_n$, then $\lim f=r$ and $f$ extends uniquely to $\tilde{f}\in B'$ such that $\tilde{f}(\infty)=r$. Therefore, $B'/J_{\infty}\cong [0,1]$ and $J_{\infty}$ has infinite rank as claimed. We deduce from Theorem \ref{main} that $\widehat{B}\cong \prod_{n=1}^\infty\L_{n+1}$.

Moreover, for every $n\geq 1$, let $f_n\in B$ defined by $f_n(n)=\frac{1}{n}$ and $f_n(k)=0$ if $k\ne n$. Then it is immediate that $B$ is atomic, the set of atoms of $B$ is $\{f_n: n\in\mathbb{N}\}$ and by the preceding discussion $\text{Max}_{f}(A)=\{J_n:n\in\mathbb{N}\}$. Therefore, the $B$ satisfies the the conditions stated in Proposition \ref{Mac}. Thus $\widehat{B}\cong \overline{B}$ and
$\widehat{B}\cong \overline{B}\cong \prod_{n=1}^\infty\L_{n+1}$..
\end{ex}
\begin{rem}
As announced, the MV-algebra $B$ of Example \ref{noncomplete} is not complete. Indeed, define $f_{2k}=0$ for all $k$, $f_{2k+1}(k)=1$ and $f_{2k+1}(n)=0$ for all $n\ne k$. Then $\{f_n: n\in\mathbb{N}\}$ is a subset of $B$ that does not have a supremum in $B$.
\end{rem}
\section{conclusion and final remarks}
A description of the profinite completion of any MV-algebra is obtained using its maximal ideals of finite ranks (Theorem \ref{main}). The stated description is used to study profinite MV-algebras that are isomorphic to profinite completions of MV-algebras. Finally, a necessary and sufficient condition for the profinite completion and MacNeille completion to be isomorphic is established (Proposition \ref{Mac}). In the proof of Proposition \ref{Mac}, a description of the MacNeille completion of semisimple atomic MV-algebra was used. In a future project, we would like to find a description of the MacNeille completion of general semisimple MV-algebras which should coincide with the stated description for semisimple MV-algebras. Finally, in addition to profinite and MacNeille completions, canonical extensions have also been introduced and studied. These have been extensively studied on Boolean algebras $\mathbf{BA}$, distributive lattices $\mathbf{DL}$, and Heyting algebras $\mathbf{HA}$ (see for e.g., \cite{BGMM, GHV, JH}). In the same spirit, we would also like to investigate canonical extensions of MV-algebras and compare it to the two completions treated in this paper.
\vspace{1cm}\\
\textbf{Acknowledgements:} I would like to thank Professor G. Bezhanishvili for suggesting the topics of \cite{jbn2} and this article, and more importantly for his very insightful comments during the preparation of these articles.

\end{document}